\documentclass[12pt,a4paper]{amsart}
\usepackage{amsmath}
\usepackage{mathtools}

\DeclarePairedDelimiter{\norm}{\lVert}{\rVert}

\usepackage{amsfonts}
\usepackage{amsthm}
\usepackage{autobreak}
\usepackage[english]{babel}
\usepackage{bold-extra}
\usepackage{hyperref}
\theoremstyle{definition}
\newtheorem{definition}{Definition}
\theoremstyle{theorem}
\newtheorem{prop}[definition]{Proposition}

\newtheorem{lemma}[definition]{Lemma}
\newtheorem{theorem}[definition]{Theorem}

\theoremstyle{remark}
\newtheorem{remark}[definition]{Remark}

\numberwithin{definition}{section}
\numberwithin{exercise}{section}
\theoremstyle{theorem}
\newcommand{\thistheoremname}{}
\newtheorem*{genericthm*}{\thistheoremname}
\newenvironment{namedthm*}[1]
{\renewcommand{\thistheoremname}{#1}%
	\begin{genericthm*}}
	{\end{genericthm*}}
\allowdisplaybreaks

\def\D{{\mathbb{D}}}

\def\C{{\mathbb{C}}}

\def\N{{\mathbb{N}}}

\def\T{{\mathbb{T}}}
\def\UU{{\mathcal{U}}}

\newcommand{\vp}{\varphi}

\usepackage{multicol}
\usepackage[T1]{fontenc}
\usepackage[utf8]{inputenc}
\usepackage[a4paper]{geometry}
\geometry{verbose,tmargin=2.5cm,bmargin=3cm,lmargin=2.5cm,rmargin=2cm}
\setcounter{secnumdepth}{5}
\usepackage{amssymb,amsmath}
\usepackage{version}
\usepackage{mathtools} 
\usepackage{enumerate}
\usepackage{dsfont}
\usepackage{mathdots}
\makeatletter

\usepackage{latexsym}
\usepackage{color}

\begin{document}
	\title[Simultaneous Approximation by Finite Blaschke Products]{Simultaneous Approximation by Finite Blaschke Products and Bounded Universal Functions}
	\author{Konstantinos Maronikolakis}
	\address{Konstantinos Maronikolakis, Department of Mathematics, Bilkent University, 06800 Ankara, Turkey}
	\email{conmaron@gmail.com}
	\date{}
	\keywords{finite Blaschke products, approximation, boundary behaviour}
	\subjclass[2020]{30J10, 30E10, 30K15}
	\begin{abstract}
		This paper complements the work done on simultaneous approximation results in classical Banach spaces, by focusing on approximation by finite Blaschke products. We prove the existence of a finite Blaschke product that approximates a prescribed holomorphic function bounded by 1 locally uniformly on the unit disc, and simultaneously approximates a prescribed unimodular continuous function uniformly on a compact subset of the unit circle of arclength measure 0. We also prove an analogue where the continuous function is bounded by 1 and the the approximation is achieved by an appropriate dilate of the finite Blaschke product. These results are essentially combinations of classical results of Caratheodory and Fisher on approximation by finite Blaschke products. We also give analogues for singular inner functions. Finally, we apply our results to prove the existence of bounded holomorphic functions on the unit disc that exhibit a certain universal boundary behaviour.
	\end{abstract}
	\maketitle
	\section{Introduction}
	This paper is broadly concerned with bounded holomorphic functions on the unit disc $\D$. As usual, we denote this class of functions with $H^\infty$. The set $H^\infty$ is a Banach space when we endow it with the supremum norm $\Vert\cdot\Vert_{\infty}$ on $\D$, that is, $\Vert f\Vert_{\infty}:=\sup_{z\in\D}|f(z)|$ for $f\in H^\infty$. Similarly, for a compact set $K$ in $\C$, we define the supremum norm on $K$ as $\Vert\vp\Vert_{K}=\sup_{z\in K}|f(z)|$ for $\vp$ a continuous (complex-valued) function on $K$. The space $C(K)$ of continuous functions on $K$ is also a Banach space when endowed with the supremum norm.
	
	The main objects of this paper are \emph{finite Blaschke products}, which are finite products of automorphisms of $\D$, that is, functions of the form
	$$\zeta\prod_{k=0}^{n}\frac{z-w_k}{1-\overline{w_k}z}\text{ for }z\in\D$$
	where $w_0,w_1,\dots,w_n\in\D$ and $\zeta\in\T:=\partial\D$. There is a vast literature on finite Blaschke products and they appear naturally throughout the study of bounded holomorphic functions as well as other areas, including geometry and operator theory. We refer the interested reader to the survey of Garcia, Mashreghi and Ross \cite{GarciaMashreghiRoss2018}.
	
	They also play a crucial role in Approximation Theory. The principal theorem exemplifying this role is \hyperref[caratheo]{Caratheodory's Theorem} which states that the set of finite Blaschke products is dense in the closed unit ball $\mathcal{B}:=\{f\in H^\infty:\norm{f}_\infty\leq1\}$ of $H^\infty$ with respect to the locally uniform topology \cite{Caratheodory1954}. A result of similar nature concerning the possible boundary approximation of finite Blaschke products is \hyperref[fisher_cont]{Fisher's Theorem} which states that the set of finite Blaschke products is dense in the set $\{\vp\in C(K):|\vp|\equiv1\text{ on }K\}$, where $K$ is any compact subset of $\T$ of arclength measure 0 \cite{Fisher1971}. Our main motivating question is the following.
	
	\vspace{\baselineskip}
	
	\noindent \emph{Let $L$ be a a compact subset of $\D$, $K$ be compact subset of $\T$ of arclength measure 0, $f\in\mathcal{B}$ and $\vp\in C(K)$ such that $|\vp|\equiv1$ on $K$. We know that there exists a finite Blaschke product that approximates $f$ as well as we want uniformly on $L$ (\hyperref[caratheo]{Caratheodory's Theorem}) and a possibly different finite Blaschke product that approximates $\vp$ as well as we want uniformly on $K$ (\hyperref[fisher_cont]{Fisher's Theorem}). Is it possible to find a finite Blaschke product that simultaneously achieves both approximations?}
	
	\vspace{\baselineskip}
	
	We call this phenomenon \emph{simultaneous approximation}. This question arises naturally from examining similar problems of approximation in Banach spaces of holomorphic functions. For example, we can consider, for $1\leq p<\infty$, the Hardy space $H^p$ of holomorphic functions $f$ on $\D$ such that $\lVert f\rVert_p^p:=\sup_{0<r<1}\left(\frac{1}{2\pi}\int_{0}^{2\pi}|f(re^{it})|^pdt\right)<\infty$ (we refer to \cite{Duren1970} for the theory of these spaces). It is a standard result in the theory of Hardy spaces that the set of polynomials is dense in $H^p$. Similarly, if $K$ is a compact subset of $\T$ of arclength measure 0, we have that the set of polynomials is dense in $C(K)$; one way to see this is as an application of Mergelyan's Theorem. In \cite{BeiseMuller2016}, Beise and M\"uller proved the following simultaneous approximation result: let $K$ be a compact subset of $\T$ of arclength measure 0, $f\in H^p$ and $\vp\in C(K)$. Then, for every $\varepsilon>0$, there exists a polynomial $P$ such that
	$\norm*{P-\vp}_K<\varepsilon\ \text{ and }\ \norm*{P-f}_p<\varepsilon.$
	
	These kinds of simultaneous approximation results in Banach spaces of holomorphic functions have been studied since the work of Khrushchev \cite{Khrushchev1979} and have been proved in various settings, where the required condition on the compact set $K$ is dependent on the corresponding space; see for example \cite{CostakisJungMuller2019,Muller2019,Limani2024,CharpentierEspoullierZarouf2025}. One of our main results (Theorem \ref{sim_approx_circle}) answers positively the motivational question stated above, which essentially gives us a simultaneous approximation result by finite Blaschke products. We also prove an analogue where the function $\vp$ we wish to approximate is not necessarily unimodular on $K$ but is rather an element of the closed unit ball $BC(K):=\{\vp\in C(K):\lVert\vp\rVert_K\leq1\}$, but then the approximation is accomplished by appropriate dilates of finite Blaschke products (Theorem \ref{sim_approx_disc}).
	
	Despite the inspiration by this previous work, there is a crucial difference in the proof of this fact compared to the aforementioned established results, which is that the set of polynomials, as well as the spaces in which we want to approximate are vector spaces. This allows the use of linear techniques (the Hahn-Banach Theorem usually plays a central role). In our setting, this is not the case; the set of finite Blaschke products is not a vector space and similarly the same is true for $\mathcal{B}$ and $\{\vp\in C(K):|\vp|\equiv1\text{ on }K\}$. Thus, our proof uses constructive methods. We note that constructive methods have been used before in \cite{BeneteauIvriiManolakiSeco2020} and \cite{CharpentierEspoullierZarouf2025} for the proofs of simultaneous approximation theorems.
	
	A notion that is intimately connected with simultaneous approximation is \emph{universality}. Indeed, the authors of \cite{BeiseMuller2016,CostakisJungMuller2019,Muller2019} use their results to prove the existence of so-called ``universal Taylor series'' in Hardy, Bergman and Dirichlet spaces (we refer to these papers and the references therein for the definition of universal Taylor series). Similarly, the existence of functions whose dilates can approximate anything plausible on specific compact subsets of $\T$ was proved in \cite{Maronikolakis2022} for Hardy, Bergman and Dirichlet spaces and in \cite{CharpentierEspoullierZarouf2025} for the Bloch space. A more precise and detailed account of these types of results (and universal boundary behaviour of holomorphic functions more generally) is given in Section \ref{univ}. In that section, we use our new simultaneous approximation results to prove the existence of functions in $\mathcal{B}$ (and even infinite Blaschke products) whose dilates exhibit universal properties (Theorem \ref{univ_main}).
	
	Finally, we also prove analogues of our main results where the approximation is achieved by singular inner functions instead of finite Blaschke products (see Theorem \ref{sim_approx_sing}).
	\section{Approximation by finite Blaschke products}
	\subsection{The theorems of Fisher and Caratheodory and main results}
	The approximation properties of finite Blaschke products have been studied thoroughly since the work of Caratheodory \cite{Caratheodory1954} (see also below). We present a couple results relevant to the current work here (\hyperref[fisher_cont]{Fisher's Theorem} and \hyperref[caratheo]{Caratheodory's Theorem}) and refer to \cite{GarciaMashreghiRoss2017} or \cite{GarciaMashreghiRoss2018} for the history and overview of these kinds of results.
	\begin{namedthm*}{Fisher's Theorem}[Theorem 1 (b) in \cite{Fisher1971}]\label{fisher_cont}
		Let $K$ be a compact subset of $\T$ of arclength measure $0$. Let also $\vp$ be a continuous function on $K$ with $|\vp|\equiv1$ on $K$ and $\varepsilon>0$. Then, there exists a finite Blaschke product $B$ such that
		$$\norm*{B-\vp}_K<\varepsilon.$$
	\end{namedthm*}
	We make two remarks about the previous result. The first is that the cited theorem seems at first glance weaker than the statement above. Indeed it says the following: \emph{Let $K$ be a compact subset of $\T$ of arclength measure $0$. Let also $\vp\in A(\D)$ with $\lVert\vp\rVert_\infty\leq1$, $\vp\equiv1$ on $K$ and $\varepsilon>0$. Then, there exists a finite Blaschke product $B$ such that $\norm*{B-\vp}_K<\varepsilon.$} (where we denote throughout by $A(\D)$ the algebra of the unit disc, that is $A(\D):=\{f:\overline{\D}\to\C:f\text{ is holomorphic on }\D\text{ and continuous on }\overline{\D}\}$). The two forms can be reconciled after taking into account the Rudin-Carleson Theorem (proved independently by Rudin \cite{Rudin1956} and Carleson \cite{Carleson1957}).
	\begin{namedthm*}{Rudin-Carleson Theorem}[\cite{Rudin1956}]\label{rcthm}
		Let $K$ be a compact subset of $\T$ of arclength measure $0$. Let also $\vp$ be a continuous function on $K$ and $T\subseteq\C$ a set that is homeomorphic to $\overline{\D}$. Then, there exists $f\in A(\D)$ such that $f\equiv\vp$ on $K$ and $f(\overline{\D})\subseteq T$.
	\end{namedthm*}
	If we apply this theorem for $\vp$ satisfying $|\vp|\equiv1$ on $K$ and $T=\overline{\D}$, we get the existence of a function $f\in\mathcal{B}$ such that $f\equiv\vp$ on $K$. The aforementioned equivalence of the two forms of Fisher's Theorem is then immediate.
	
	Our second remark about \hyperref[fisher_cont]{Fisher's Theorem} is that it is the lesser known result of Fisher on approximation by finite Blaschke products. Indeed, the result that is more frequently cited as Fisher's Theorem in similar contexts concerns approximation by convex combinations of finite Blaschke products \cite{Fisher1968}.
	
	The next important theorem on approximation by finite Blaschke products is a result of Caratheodory \cite{Caratheodory1954}.
	\begin{namedthm*}{Caratheodory's Theorem}[Theorem 5.1 in \cite{GarciaMashreghiRoss2017}]\label{caratheo}
		Let $L$ be a compact subset of $\D$. Let also $f\in\mathcal{B}$ and $\varepsilon>0$. Then, there exists a finite Blaschke product $B$ such that
		$$\norm*{B-f}_L<\varepsilon.$$
	\end{namedthm*}
	Our first main result is essentially a ``combination'' of the aforementioned theorems of Fisher and Caratheodory, in the sense that we can find a finite Blaschke products that achieves both approximations.
	\begin{theorem}\label{sim_approx_circle}
		Let $K$ be a compact subset of $\T$ of arclength measure $0$ and $L$ be a compact subset of $\D$. Let also $\vp$ be a continuous function on $K$ with $|\vp|\equiv1$ on $K$ and $f\in\mathcal{B}$. Finally, let $\varepsilon>0$. Then, there exists a finite Blaschke product $B$ such that
		$$\norm*{B-\vp}_K<\varepsilon\ \text{ and }\ \norm*{B-f}_L<\varepsilon.$$
	\end{theorem}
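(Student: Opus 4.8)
The plan is to combine Fisher's and Caratheodory's theorems by a two-step construction: first produce a function in $\mathcal{B}$ that agrees with $\vp$ on $K$ and is close to $f$ on $L$, and then approximate that function by a single finite Blaschke product in a way that controls both the behaviour on $L$ and the behaviour on $K$ simultaneously. The difficulty is that Caratheodory's Theorem, applied naively, only gives control on a compact subset of the open disc, not on the boundary set $K$; and Fisher's Theorem only gives control on $K$. So the crux is to run an argument that keeps track of both at once.

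First I would reduce to the case where $f$ itself extends continuously to $\overline{\D}$ and agrees with $\vp$ on $K$. To do this, apply the Rudin-Carleson Theorem to find $g\in A(\D)$ with $g\equiv\vp$ on $K$ and $g(\overline\D)\subseteq\overline\D$, so $g\in\mathcal{B}$. The issue is that $g$ need not be close to $f$ on $L$. To fix this, I would interpolate between $f$ and $g$: consider, for small $\delta>0$, a function such as $h=(1-\delta)f+\delta g$ (which lies in $\mathcal{B}$ by convexity), or better, a function of the form $h = f\cdot u + (\text{correction})$ where $u$ is an outer or $A(\D)$ function that is close to $1$ on $L$ but forces $h$ to take the values $\vp$ on $K$. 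The cleanest route: since $K$ has arclength measure $0$, pick by Rudin-Carleson a function $\psi\in A(\D)$ with $\psi\equiv 0$ on $K$, $\norm{\psi}_\infty\le 1$, and $|\psi|$ close to $1$ off a small neighbourhood of $K$ — then set $h = \psi f + (1-\psi)g$ or a suitably normalized variant, arranging $\norm{h}_\infty\le 1$, $h\equiv\vp$ on $K$, and $\norm{h-f}_L<\varepsilon/2$. Some care with norms is needed here (one may have to shrink $f$ slightly to $(1-\eta)f$ first so that the combination stays in $\mathcal{B}$), and this bookkeeping is the first real technical step.

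Next, with $h\in\mathcal{B}$ satisfying $h\equiv\vp$ on $K$ and $\norm{h-f}_L<\varepsilon/2$ in hand, I would apply Caratheodory's Theorem to get a finite Blaschke product $B$ with $\norm{B-h}_{L'}<\varepsilon/2$ on a slightly enlarged compact set $L'\supseteq L$ — but this still says nothing on $K$. The key extra input is to upgrade Caratheodory's Theorem so that it also approximates on $K$. Here I expect to invoke (a quantitative form of) Fisher's Theorem applied not to $\vp$ directly but to $h$ restricted to a compact neighbourhood: since $h\in A(\D)$ after the reduction and $h\equiv\vp$ on $K$, Fisher's Theorem gives a finite Blaschke product close to $h$ on $K$; the task is to merge this with the Caratheodory approximation on $L$. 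The standard device is to prove a joint statement by examining the actual constructive proof of Caratheodory's theorem (the classical argument builds $B$ as $B = z^N B_1$-type products or via solving an interpolation/extremal problem), and to observe that Fisher's construction is flexible enough to impose finitely many interpolation conditions coming from $L$ at the same time — equivalently, approximate $h$ on the compact set $K\cup L$ after noting $K\cup L$ is a compact subset of $\overline\D$ on which $h$ is continuous, $|h|\le 1$, and $|h|=1$ on $K$.

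The main obstacle, as I see it, is precisely this last merging step: neither theorem as a black box handles a compact set that straddles the boundary and the interior, and one must open up at least one of the two proofs. I would expect the cleanest path to be: (1) use Rudin-Carleson plus a convexity/outer-function trick to get $h\in A(\D)\cap\mathcal{B}$ with $h=\vp$ on $K$ and $h$ close to $f$ on $L$; (2) apply Fisher-type reasoning to the compact set $K$, but arrange via the construction that the resulting finite Blaschke product $B$ inherits closeness to $h$ on $L$ as well — most likely by first perturbing $h$ to an $A(\D)$ function all of whose values on $\overline\D$ lie in $\overline\D$ with equality only near $K$, then approximating on $K\cup L$ simultaneously using the fact that, away from $K$, we only need interior (Caratheodory-type) approximation, which is soft. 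A triangle-inequality assembly then yields $\norm{B-\vp}_K<\varepsilon$ and $\norm{B-f}_L<\varepsilon$, completing the proof.
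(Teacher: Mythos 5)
Your proposal correctly identifies the crux --- neither Fisher's nor Caratheodory's theorem, used as a black box, handles a compact set straddling $K\subseteq\T$ and $L\subseteq\D$ --- but it does not actually resolve it. After your Rudin--Carleson reduction you are left with the task of approximating a function $h\in A(\D)\cap\mathcal{B}$ with $|h|\equiv1$ on $K$ by a single finite Blaschke product uniformly on $K\cup L$; this is exactly the statement of the theorem (with $\vp=h|_K$ and $f=h$), so the reduction is essentially circular, and the remaining step is only gestured at (``open up at least one of the two proofs'', ``Fisher's construction is flexible enough to impose finitely many interpolation conditions''). That is where the actual idea is needed, and it is missing. A secondary issue: the additive gluing $h=\psi f+(1-\psi)g$ does not stay in $\mathcal{B}$, since an $A(\D)$ function $\psi$ vanishing on $K$ with $|\psi|$ near $1$ elsewhere cannot take values in $[0,1]$; the paper handles the analogous construction (for the dilate version, Theorem 2.3) multiplicatively, via exponentials of a Rudin--Carleson-type function with controlled real and imaginary parts (Lemmas 2.6--2.7).

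The paper's resolution of the merging step is a multiplicative decomposition rather than a merged construction, and it avoids opening up either classical proof. The key is Proposition 2.4: using only Fisher's Theorem one can produce a finite Blaschke product close to $\vp$ on $K$ \emph{and} close to the constant $1$ on $L$. The trick is to fix $w_n\in(0,1)$ near $1$, apply Fisher to $z\mapsto\phi_{w_n}(\vp(z))/z$ with error $(1-w_n)^2$, and set $f_n(z)=\phi_{w_n}^{-1}(zB_0(z))$; this is again a finite Blaschke product, the Lipschitz constant $\tfrac{1+w_n}{1-w_n}$ of $\phi_{w_n}^{-1}$ converts the error to $O(1-w_n)$ on $K$, and since $f_n(0)=w_n\to1$ a normal-families plus maximum-modulus argument forces $f_n\to1$ locally uniformly on $\D$. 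The general case then follows by taking a Caratheodory approximant $B_0$ of $f$ on $L$, applying the proposition to $\vp/B_0$, and setting $B=B_0B_1$. If you want to salvage your outline, this factorization $B=B_0B_1$ is the ingredient to supply in place of your final merging step.
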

	Next, we want to consider the case where $\vp$ is not necessarily unimodular, but is rather contained in the closed unit ball of $C(K)$, which we denote by $BC(K)$. Clearly, the approximation can then not be possible as finite Blaschke products are unimodular on $\T$. But, by dilating the Blaschke products, and thus examining their values inside the unit disc, we get the the following result. For $r\in(0,1)$ and $f\in A(\D)$ we denote by $f_r\in\D$, the dilate of $f$, that is, the function $\overline{\D}\ni z\mapsto f(rz)\in\C$.
	\begin{theorem}\label{sim_approx_disc}
		Let $K$ be a compact subset of $\T$ of arclength measure $0$ and $L$ be a compact subset of $\D$. Let also $\vp\in BC(K)$ and $f\in\mathcal{B}$. Finally, let $\varepsilon>0$. Then, there exists $r_0\in(0,1)$ such that, for any $r\in(r_0,1)$, there exists a finite Blaschke product $B$ such that
		$$\norm*{B_r-\vp}_K<\varepsilon\ \text{ and }\ \norm*{B-f}_L<\varepsilon.$$
	\end{theorem}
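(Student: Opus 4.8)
The plan is to combine the \hyperref[rcthm]{Rudin-Carleson Theorem}, \hyperref[caratheo]{Caratheodory's Theorem} and the constructive mechanism behind \hyperref[fisher_cont]{Fisher's Theorem} (equivalently, behind Theorem \ref{sim_approx_circle}); the dilation is precisely the device that neutralises the failure of $\vp$ to be unimodular. First I would trade $\vp$ for a holomorphic datum: fixing $\eta\in(0,\varepsilon/3)$ and applying the \hyperref[rcthm]{Rudin-Carleson Theorem} with $T=\overline{D(0,1-\eta)}$ to $(1-\eta)\vp$ (whose values on $K$ lie in $\overline{D(0,1-\eta)}$) yields $g\in A(\D)$ with $\lVert g\rVert_\infty\le1-\eta$ and $g\equiv(1-\eta)\vp$ on $K$, so $\lVert g-\vp\rVert_K\le\eta<\varepsilon/3$. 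Since $g$ is uniformly continuous on $\overline{\D}$, there is $\rho_1\in(0,1)$ with $\lVert g_r-g\rVert_{\overline{\D}}<\varepsilon/6$, hence $\lVert g_r-\vp\rVert_K<\varepsilon/2$, for every $r\in(\rho_1,1)$. Fixing $\rho\in(0,1)$ with $L\subseteq\overline{D(0,\rho)}=:L'$ and using $\lVert\cdot\rVert_L\le\lVert\cdot\rVert_{L'}$, the theorem reduces to: for $r$ close to $1$, produce a finite Blaschke product $B$ with $\lVert B-g\rVert_{rK}<\varepsilon/2$ and $\lVert B-f\rVert_{L'}<\varepsilon$, since then $\lVert B_r-\vp\rVert_K\le\lVert B-g\rVert_{rK}+\lVert g_r-\vp\rVert_K<\varepsilon$.

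Next I would strip off the behaviour on $L'$ with a Caratheodory correction factor. By \hyperref[caratheo]{Caratheodory's Theorem} fix a finite Blaschke product $B_0$ with $\lVert B_0-f\rVert_{L'}<\varepsilon/3$. As $B_0$ has finitely many zeros and $\abs{B_0(z)}\to1$ as $\abs z\to1$, after fixing $\delta\in(0,\varepsilon/4)$ I can pick $r_0\in(\max\{\rho,\rho_1\},1)$ so large that all zeros of $B_0$ lie in $D(0,r_0)$ and $\min_{\abs z=r}\abs{B_0(z)}\ge(1+\delta)^{-1}$ for all $r\in(r_0,1)$. Fix such an $r$. On $rK\subseteq\{\abs z>r_0\}$ the function $g/B_0$ is then holomorphic, continuous and of modulus $\le1+\delta$, so $\Psi:=(1+\delta)^{-1}(g/B_0)|_{rK}\in BC(rK)$ with $\lVert\Psi-g/B_0\rVert_{rK}\le\delta$. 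It now suffices to find a finite Blaschke product $B_1$ with
$$\lVert B_1-1\rVert_{L'}<\delta\qquad\text{and}\qquad\lVert B_1-\Psi\rVert_{rK}<\delta ,$$
for then $B:=B_0B_1$ is a finite Blaschke product with $\lVert B-f\rVert_{L'}\le\lVert B_1-1\rVert_{L'}+\lVert B_0-f\rVert_{L'}<\delta+\varepsilon/3<\varepsilon$ and, since $B_0\Psi=(1+\delta)^{-1}g$ on $rK$, $\lVert B-g\rVert_{rK}\le\lVert B_1-\Psi\rVert_{rK}+\lVert B_0\Psi-g\rVert_{rK}\le\delta+\tfrac{\delta}{1+\delta}<2\delta<\varepsilon/2$.

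The hard part will be the displayed claim. (The tempting shortcut of gluing $f$ on $L'$ and $g$ on $rK$ into a single element of the closed unit ball of $H^\infty$ and then invoking Caratheodory fails, since bounded holomorphic functions are too rigid to be prescribed freely even on disjoint compact subsets of $\D$; what saves us is that this rigidity dissolves near $\T$, which is exactly why $r$ must be taken close to $1$.) Thus one must directly build a finite Blaschke product that is within $\delta$ of the constant $1$ on the fixed disc $L'$ while realising the arbitrary prescribed $BC(rK)$-profile $\Psi$ on the dilate $rK$ of an arclength-null subset of $\T$; this is the constructive core of \hyperref[fisher_cont]{Fisher's Theorem} (and of Theorem \ref{sim_approx_circle}) carried one notch inside the boundary, and since $rK\subseteq\D$ a finite Blaschke product may take values of modulus $<1$ there, so the non-unimodularity of $\Psi$ is no obstruction. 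I would obtain it by adapting the constructive proof of Theorem \ref{sim_approx_circle}, with the boundary datum replaced by $g\in A(\D)$ (recall $\lVert g\rVert_\infty<1$) and $K$ replaced by $rK$: the arclength-nullity of $K$ is what allows one to place all the zeros of $B_1$ in a thin annulus just inside $\T$ so as to shape $B_1$ into $\Psi$ on $rK$, while---because a Blaschke factor with zero tending to $\zeta\in\T$ tends to $-\zeta$ locally uniformly on $\D$---such a $B_1$ is automatically within $\delta$, after one overall rotation, of the constant $1$ on $L'$. Verifying that this zero placement can be executed while honouring the interior normalisation on $L'$ is the one genuinely delicate point; the remainder is triangle inequalities, uniform continuity, and tracking the thresholds $\eta,\rho_1,\delta,r_0$.
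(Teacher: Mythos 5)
Your reduction is clean up to the displayed claim, but that claim \emph{is} the theorem, and the sketch you give for it does not hold up. First, as stated it is false for part of your range of $r$: if $\norm{B_1-1}_{L'}<\delta$ then $\abs{B_1(0)}>1-\delta$, and the Schwarz--Pick lemma gives $\abs{B_1(z)}\geq\frac{(1-\delta)-r}{1-(1-\delta)r}$ for $\abs{z}=r$, which exceeds $1/3$ whenever $r\leq1-2\delta$ and $\delta<1/3$. So if $\vp\equiv0$ (hence $\Psi$ is small on $rK$), no such $B_1$ exists unless $1-r<2\delta$ --- a constraint that your choice of $r_0$ (governed by $\rho$, $\rho_1$, the zeros of $B_0$ and $\min\abs{B_0}$) does not impose. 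Second, even after adding $r_0\geq1-c\delta$, the assertion that a product shaped to match $\Psi$ on $rK$ by zeros in a thin annulus is ``automatically'' within $\delta$ of a constant on $L'$ is not automatic: each factor with zero $w_j$ deviates from its limiting unimodular constant by $O\bigl((1-\abs{w_j})/(1-\rho)\bigr)$ on $L'$, the number $N$ of factors needed to realize an arbitrary $\Psi$ on $rK$ is not controlled, and if you instead push the zeros so close to $\T$ that $N(1-\abs{w_j})$ is small, the analogous estimate on $\overline{D(0,r)}\supseteq rK$ forces $B_1$ to be near-constant on $rK$ as well, destroying the approximation of $\Psi$. Quantifying this trade-off is exactly the difficulty, and nothing in Fisher's Theorem or Theorem \ref{sim_approx_circle} (both of which concern unimodular data on subsets of $\T$) supplies it; deducing it from Theorem \ref{sim_approx_disc} itself would be circular, since the admissible range of $r$ there depends on the target function, which here depends on $r$.

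The paper's proof sidesteps the two-compact-set interpolation problem entirely by merging the two pieces of data into a \emph{single} holomorphic function before any Blaschke product appears: Proposition \ref{approxRC} (built on the Rudin--Carleson-type Lemma \ref{RCri}, which produces $h\in A(\D)$ with $h\equiv1$ on $K$, $\Re(h)\geq0$, $\abs{\Im(h)}$ small, and $\abs{h}$ small off a neighbourhood $U$ of $K$) yields $g$ in the closed unit ball of $A(\D)$ with $\norm{g-\vp}_K$ and $\norm{g-f}_{\overline{\D}\setminus U}$ both small. Uniform continuity then gives $\norm{g_r-g}_K$ small for $r>r_0$, and a \emph{single} application of Caratheodory's Theorem to $g$ on the compact subset $L\cup rK$ of $\D$ finishes; no boundary (Fisher-type) approximation is needed in this theorem at all. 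Your instinct that naively gluing $f$ on $L'$ and $g$ on $rK$ is impossible is correct, but the viable gluing happens \emph{before} dilation --- on $K\subseteq\T$ versus $\overline{\D}\setminus U$ --- where the arclength-measure-zero hypothesis makes it possible.
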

	The proofs of the last two theorems are given in the next two subsections.
	\subsection{Proof of Theorem \ref{sim_approx_circle}}
	First, we will prove the special case $f\equiv1$ before proceeding with the result for a general function $f$. For $w\in\D$, we will denote by $\phi_w$ the map $\overline{\D}\ni z\mapsto\frac{z-w}{1-\overline{w}z}\in\overline{\D}$. 
	\begin{prop}\label{simul_approx1}
		Let $K$ be a compact subset of $\T$ of arclength measure $0$ and $L$ be a compact subset of $\D$. Let also $\vp$ be a continuous function on $K$ with $|\vp|\equiv1$ on $K$ and $\varepsilon>0$. Then, there exists a finite Blaschke product $B$ such that
		$$\norm*{B-\vp}_K<\varepsilon\ \text{ and }\ \norm*{B-1}_L<\varepsilon.$$
	\end{prop}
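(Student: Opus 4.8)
The plan is to combine \hyperref[fisher_cont]{Fisher's Theorem} with two elementary manipulations of finite Blaschke products. The obstruction to a direct appeal to Fisher's Theorem is that it controls $B$ only on $K$ and says nothing about $L$; the observation that unlocks the argument is that one can cheaply force an approximant to be \emph{near $0$} on $L$. Indeed, if $\beta$ is any finite Blaschke product, then $z^{N}\beta$ is again a finite Blaschke product, it has modulus $1$ on all of $\T$ — so multiplying by $z^{N}$ does not disturb an approximation taking place on $K\subseteq\T$, provided we compensate by pre-dividing the target by $z^{N}$ — yet $\abs{z^{N}\beta}\le(\max_{z\in L}\abs{z})^{N}\to 0$ uniformly on the compact set $L\subseteq\D$. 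Having arranged ``$B$ near $0$ on $L$'', one passes to ``$B$ near $1$ on $L$'' by post-composing with the disc automorphism $\phi_{-p}$, which sends $0$ to $p$; choosing $p$ close to $1$ does the job, and since $\phi_{-p}^{-1}=\phi_{p}$ we only have to feed Fisher's Theorem the function $\phi_{p}\circ\vp$ in place of $\vp$, which is still continuous and unimodular on $K$ because $\phi_{p}$ is a homeomorphism of $\T$.

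Carried out in order: first I would fix $p\in\D$ with $\abs{1-p}<\varepsilon/2$, let $\Lambda<\infty$ be a Lipschitz constant for $\phi_{-p}$ on $\overline{\D}$ (finite since $\phi_{-p}$ is holomorphic on a neighbourhood of $\overline{\D}$), set $\delta:=\varepsilon/(2\Lambda)$, and pick $N\in\N$ with $(\max_{z\in L}\abs{z})^{N}<\delta$. Next, applying \hyperref[fisher_cont]{Fisher's Theorem} to the continuous unimodular function $\zeta\mapsto\overline{\zeta^{N}}\,\phi_{p}(\vp(\zeta))$ on $K$, I obtain a finite Blaschke product $\beta$ approximating it within $\delta$ on $K$. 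Finally I set $A:=z^{N}\beta$ and $B:=\phi_{-p}\circ A$; both are finite Blaschke products, since the class of finite Blaschke products is closed under products and compositions. The verification is then immediate from the Lipschitz bound and the identity $\phi_{-p}\circ\phi_{p}=\mathrm{id}$: on $K$ the factor $z^{N}$ has modulus $1$, so $\norm*{A-\phi_{p}\circ\vp}_{K}<\delta$ and hence $\norm*{B-\vp}_{K}\le\Lambda\delta<\varepsilon$; on $L$ one has $\norm*{A}_{L}\le(\max_{z\in L}\abs{z})^{N}<\delta$, so $\norm*{B-p}_{L}=\norm*{\phi_{-p}\circ A-\phi_{-p}(0)}_{L}\le\Lambda\delta=\varepsilon/2$, and therefore $\norm*{B-1}_{L}<\varepsilon$.

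The only genuinely delicate point I anticipate is the bookkeeping of constants: the Lipschitz constant $\Lambda=\Lambda(p)$ degenerates as $p\to 1$, so the accuracy $\delta$ demanded of Fisher's Theorem and the exponent $N$ must be selected only \emph{after} $p$, and hence $\Lambda$, has been fixed — this causes no difficulty because Fisher's Theorem delivers arbitrarily good accuracy. Apart from that, the argument rests only on standard facts that would have to be recorded: $\phi_{p}$ is a homeomorphism of $\T$ (so unimodularity survives the conjugation $\vp\mapsto\phi_{p}\circ\vp$), $\phi_{-p}^{-1}=\phi_{p}$, and finite Blaschke products are stable under products and compositions, so that $B=\phi_{-p}\circ(z^{N}\beta)$ indeed qualifies. (The general case of Theorem~\ref{sim_approx_circle} should then follow by factoring out a \hyperref[caratheo]{Caratheodory} approximant of $f$ on $L$ and applying this proposition to $\vp$ divided by that approximant on $K$.)
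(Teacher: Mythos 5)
Your proof is correct, and the finite Blaschke product you construct is essentially the one the paper builds: both arguments feed Fisher's Theorem the twisted target $\overline{\zeta^{N}}\,\phi_{p}(\vp(\zeta))$ (the paper takes $N=1$, writing the factor as $1/\zeta$ on $\T$), multiply the resulting approximant by $z^{N}$, and post-compose with $\phi_{-p}=\phi_{p}^{-1}$ for $p$ close to $1$. Where you genuinely diverge is in how closeness to $1$ on $L$ is established. The paper records only that its approximant $f_n(z)=\phi_{-w_n}(zB_0(z))$ satisfies $f_n(0)=w_n\to1$, and then upgrades this single-point information to locally uniform convergence by a soft compactness argument (Montel's Theorem plus the Maximum Modulus Principle applied to a normal-families limit, via a contradiction with a subsequence bounded away from $1$). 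You instead take $N$ large enough that $\abs{z^{N}\beta}\le(\max_{z\in L}\abs{z})^{N}<\delta$ on all of $L$ and conclude directly from the Lipschitz bound for $\phi_{-p}$; this is more elementary and fully quantitative, avoids the sequence/subsequence argument entirely, and makes the order in which $p$, $\Lambda$, $\delta$ and $N$ must be chosen transparent --- which you correctly flag as the one delicate point. Both routes rest on the same standard facts you list (closure of finite Blaschke products under products and under composition, and $\phi_{-p}=\phi_{p}^{-1}$), and your closing remark on deducing Theorem \ref{sim_approx_circle} by factoring out a Caratheodory approximant is exactly how the paper proceeds.
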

	\begin{proof}
		Let $(w_n)_n$ be a sequence in $(0,1)$ with $\lim w_n=1$. For any $n\in\N$, let $\psi_n\in C(K)$ be the function given by $K\ni z\mapsto\frac{\phi_{w_n}(\vp(z))}{z}\in\T$. By \hyperref[fisher_cont]{Fisher's Theorem}, there exists a finite Blaschke product $B_0$ such that
		$$\norm*{B_0-\psi_n}_K<(1-w_n)^2$$
		and so
		$$\sup_{z\in K}\left|zB_0(z)-\vp_{w_n}(\vp(z))\right|<(1-w_n)^2.$$
		It is easy to check that the function $\phi_{w_n}^{-1}=\phi_{-w_n}$ is Lipschitz continuous on $\overline{\D}$ with Lipschitz constant $\frac{1+w_n}{1-w_n}$. Thus, we get that
		\begin{align*}
			\begin{autobreak}
				\MoveEqLeft[0]
				\sup_{z\in K}\left|\phi_{w_n}^{-1}(zB_0(z))-\vp(z)\right|
				=\sup_{z\in K}\left|\vp_{w_n}^{-1}(zB_0(z))-\phi_{w_n}^{-1}(\phi_{w_n}(\vp(z)))\right|
				\leq\frac{1+w_n}{1-w_n}\sup_{z\in K}\left|zB_0(z)-\phi_{w_n}(\vp(z))\right|<(1+w_n)(1-w_n).
			\end{autobreak}
		\end{align*}
		We note that the function $f_n:\overline{\D}\to\C$ with $f_n(z)=\vp_{w_n}^{-1}(zB_0(z))$ for $z\in\overline{\D}$ is a composition of a disc automorphism with a finite Blaschke product and is thus also a finite Blaschke product (see Theorem 3.6.1 in \cite{GarciaMashreghiRoss2018}). Moreover, since $\lim w_n=1$, there exists $N\in\N$ such that
		$$\norm*{f_n-\vp}_K<\varepsilon\ \text{ for any }n\geq N.$$
		Finally, we see that $f_n(0)=w_n$ for any $n\in\N$ and thus $\lim f_n(0)=1$. This gives us that $f_n$ converges to $1$ uniformly on any compact subset of $\D$. Indeed, assume that there exists $\varepsilon>0,r\in(0,1)$ and a subsequence $(f_{n_j})_j$ such that
		\begin{equation}\label{bound_away}
			\norm*{f_{n_j}-1}_{\overline{D(0,r)}}\geq\varepsilon\ \text{ for any }j\in\N
		\end{equation}
		where $D(0,r)$ denotes the open disc centered at 0 with radius $r$. By Montel's Theorem, there exists a further subsequence $(f_{n_{j_k}})_k$ that converges to some holomorphic function $g\in H(\D)$ uniformly on $\overline{D(0,r)}$. We clearly have that $\norm*{g}_{\overline{D(0,r)}}\leq1$ and $g(0)=1$, thus by the Maximum Modulus Principle, $g\equiv1$ which contradicts \eqref{bound_away} and the result follows.
	\end{proof}
	\begin{remark}
		The previous Proposition as well as its proof bare similarities to another result of Fisher. Indeed, Lemma 2 in \cite{Fisher1969} is a simultaneous approximation result in a different setting, where the approximation on the boundary is in the $L^2$ sense with respect to a given measure.
	\end{remark}
	\begin{proof}[Proof of Theorem \ref{sim_approx_circle}]
		Firstly, by \hyperref[caratheo]{Caratheodory's Theorem}, there exists a finite Blaschke product $B_0$ such that
		$$\norm*{B_0-f}_L<\frac{\varepsilon}{2}.$$
		Then, by Proposition \ref{simul_approx1}, there exists a finite Blaschke product $B_1$ such that
		$$\norm*{B_1-\frac{\vp}{B_0}}_K<\varepsilon\ \text{ and }\ \norm*{B_1-1}_L<\frac{\varepsilon}{2}.$$
		The first inequality clearly implies $\norm*{B_0B_1-\vp}_K<\varepsilon$, while from the second one, we get
		\begin{align*}
			\begin{autobreak}
				\MoveEqLeft[0]
				\norm*{B_0B_1-f}_L
				=\norm*{B_0B_1-B_1f}_L+\norm*{B_1f-f}_L
				<\frac{\varepsilon}{2}+\frac{\varepsilon}{2}=\varepsilon.
			\end{autobreak}
		\end{align*}
		By setting $B=B_0B_1$, the proof is complete.
	\end{proof}
	\subsection{Proof of Theorem \ref{sim_approx_disc}}
	Before proving the theorem, we will need some auxiliary results. The first one is a variation of constructions related to the \hyperref[rcthm]{Rudin-Carleson Theorem} (see for example Lemma 3.1 in \cite{BeneteauIvriiManolakiSeco2020}). For $\alpha\in(0,\frac{\pi}{2})$, we will denote by $C_\alpha$ the cone $\{z\in\C\setminus\{0\}:\arg(z)\in(-\alpha,\alpha)\}$ where by $\arg(z)$ for $z\in\C\setminus\{0\}$ we denote the value of the argument contained in $(-\pi,\pi]$.
	\begin{lemma}\label{RCri}
		Let $K$ be a compact subset of $\T$ of arclength measure 0. Let also $U$ an open subset of $\C$ containing $K$ and $\varepsilon>0$. Then, there exists $h\in A(\D)$ such that:
		\begin{enumerate}[(a)]
			\item $|h|<\varepsilon$ on $\overline{\D}\setminus U$;
			\item $h\equiv1$ on $K$;
			\item $\Re(h)\geq0$ on $\overline{\D}$;
			\item $|\Im(h)|<\varepsilon$ on $\overline{\D}$.
		\end{enumerate}
	\end{lemma}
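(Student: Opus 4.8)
The plan is to build $h$ as a Rudin--Carleson-type interpolant whose range is confined to a small cone around $1$. First I would fix a suitable target region $T \subseteq \overline{\D}$: take $T$ to be (a homeomorphic copy of $\overline{\D}$ contained in) the intersection of the right half-plane $\{\Re z \ge 0\}$ with the horizontal strip $\{|\Im z| < \varepsilon\}$ and a small disc $\overline{D(1,\delta)}$ around the point $1$, chosen so that $1 \in T$, $T$ is convex (hence homeomorphic to $\overline{\D}$), and every point of $T$ has modulus close to $1$ from below — concretely, pick $\delta$ small enough that $w \in \overline{D(1,\delta)}$ forces $|w| > 1 - \varepsilon'$ for a parameter $\varepsilon'$ to be fixed later, while still $|w| \le 1$ after intersecting with a region inside $\overline\D$; if one prefers, replace $\overline{D(1,\delta)}$ by the closed lens $\{|w| \le 1\} \cap \{\Re w \ge 1-\delta\}$. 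The point is that $T$ is a compact convex neighbourhood-germ of $1$ inside $\overline{\D}$, lying in the right half-plane and in the thin strip $|\Im w| < \varepsilon$.

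Next I would invoke the \hyperref[rcthm]{Rudin--Carleson Theorem} with the constant function $\vp \equiv 1$ on $K$ and this target set $T$: since $1 \in T$ and $T$ is homeomorphic to $\overline{\D}$, we obtain $h_0 \in A(\D)$ with $h_0 \equiv 1$ on $K$ and $h_0(\overline{\D}) \subseteq T$. By construction this immediately gives properties (b), (c) and (d): $h_0 \equiv 1$ on $K$; $\Re(h_0) \ge 0$ on $\overline{\D}$ because $T$ lies in the closed right half-plane; and $|\Im(h_0)| < \varepsilon$ on $\overline{\D}$ because $T$ lies in the strip. What is not yet guaranteed is property (a), the smallness of $|h|$ off $U$ — indeed $|h_0|$ is close to $1$ everywhere, which is the opposite of what (a) demands.

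To fix (a), I would multiply $h_0$ by a second Rudin--Carleson function that vanishes to high order "in the direction of" $\overline\D \setminus U$ while staying near $1$ on $K$. The clean way: apply the \hyperref[rcthm]{Rudin--Carleson Theorem} again, this time to the function $\vp \equiv 1$ on $K$, with target set a convex compact subset $T'$ of $\overline{\D}$ that contains $1$, lies in the right half-plane and in the strip $|\Im| < \varepsilon$, and additionally is pinched so that $0 \in T'$ — e.g. the closed triangle (or thin sector) with vertices $0$, $1+i\varepsilon/2$, $1-i\varepsilon/2$, suitably shrunk to stay inside $\overline{\D}$. This yields $g \in A(\D)$ with $g \equiv 1$ on $K$, $\Re g \ge 0$, $|\Im g| < \varepsilon$; but more importantly, since $K$ has arclength measure $0$, the classical strengthening of Rudin--Carleson lets us also require that $g$ be small on a prescribed closed set disjoint from $K$. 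Concretely, apply the peaking-function form: there is $u \in A(\D)$ with $u \equiv 1$ on $K$, $|u| \le 1$ on $\overline{\D}$, and $|u| < \eta$ on the compact set $\overline{\D} \setminus U$ (such $u$ exists because $K$ is a peak-interpolation set of measure zero; shrink $U$ slightly first so that $\overline{\D}\setminus U$ is compact and disjoint from $K$). Then set $h := h_0 \cdot u^{m}$ for a large integer $m$. On $K$ we have $h = h_0 = 1$, giving (b). On $\overline{\D}\setminus U$ we get $|h| \le |h_0|\,|u|^m \le 1 \cdot \eta^m < \varepsilon$ once $m$ is large, giving (a). The remaining issue is that multiplying by $u^m$ could spoil (c) and (d), since $u$ need not have nonnegative real part. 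I would handle this by choosing $u$ itself via Rudin--Carleson with a cone-shaped target: require $u(\overline{\D}) \subseteq C_\alpha \cup\{0\}$ intersected with $\overline{\D}$ for a tiny angle $\alpha$, and $u \equiv 1$ on $K$, and $|u| < \eta$ off $U$ — all simultaneously, which is legitimate because the target $\{|w|\le 1,\ |\arg w|\le\alpha\}\cup\{0\}$ is homeomorphic to $\overline{\D}$ and we may use the measure-zero peak-set refinement to impose the additional smallness constraint off $U$. Then $u^m$ has argument in $(-m\alpha, m\alpha)$ and modulus $\le 1$; choosing $\alpha$ so small that $m\alpha$ is negligible, $h_0 u^m$ still lies in a thin cone around the positive axis of height $< \varepsilon$, preserving (c) and (d). Tracking the dependencies, one fixes $\varepsilon$, then the geometry of $T$ (depending on $\varepsilon$), then $\eta$ and $\alpha$ small, then $m$ large, in that order.

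The main obstacle is the bookkeeping in this last step: one must choose the two target regions and the exponent $m$ so that \emph{all four} conditions hold at once, and in particular so that the multiplicative correction enforcing (a) does not destroy the half-plane and strip constraints (c), (d). Once the target sets are set up as thin cones/lenses around $1$ (and around $0$ for the corrector) and the measure-zero peak-interpolation refinement of Rudin--Carleson is used to get the off-$U$ smallness for free, everything reduces to elementary estimates on products of complex numbers in a narrow sector.
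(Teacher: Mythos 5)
Your overall strategy -- reduce everything to producing a function that equals $1$ on $K$, is small in modulus off $U$, and takes values in a thin sector about the positive real axis -- is the right idea, and it is in fact what the paper does (by an explicit Herglotz-type construction: two positive harmonic functions with prescribed boundary blow-up, their analytic completions $g_1,g_2$, $n$-th roots $h_1,h_2$ landing in the cone $C_{\pi/2n}$, and finally $h=h_1/(h_1+h_2)$). The problem with your write-up is that the existence of the corrector $u$ -- simultaneously $u\equiv 1$ on $K$, $u(\overline{\D})$ contained in a sector of tiny opening $\alpha$, \emph{and} $|u|<\eta$ on $\overline{\D}\setminus U$ -- is exactly the content of the lemma, and you justify it only by appeal to an unnamed ``measure-zero peak-set refinement'' of the \hyperref[rcthm]{Rudin--Carleson Theorem}. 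No standard citable statement gives all three properties at once: the range-constrained version (the one quoted in the paper) controls $u(\overline{\D})\subseteq T$ and the values on $K$ but says nothing about $|u|$ being small off $U$; the Bishop/peak-interpolation version gives $|u|<\rho$ off $K$ for a prescribed continuous majorant $\rho$ but does not confine $\arg u$ to a thin sector. Combining the two constraints is the actual work, and it has to be done by hand (which is why the paper builds $h$ explicitly rather than quoting Rudin--Carleson). As written, the key existence claim is asserted, not proved.

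There is also a quantitative circularity in your bookkeeping. You want $|u|^m<\varepsilon$ off $U$ while $m\alpha$ stays small, but you propose to fix $\alpha$ small \emph{before} choosing $m$ large. The required $m$ depends on $c_\alpha:=\sup_{\overline{\D}\setminus U}|u|<1$, and $c_\alpha$ depends on the sector-valued function $u$ you chose, which depends on $\alpha$; for a function valued in a sector of opening $\alpha$ that equals $1$ on $K$, one expects $c_\alpha\to 1$ as $\alpha\to 0$, so the needed $m$ grows and $m\alpha$ need not stay small. The clean way out is to build $u$ so that it is \emph{already} small off $U$ without raising to a power (e.g.\ $u=G^{1/n}/(c+G^{1/n})$ with $\Re G>0$, $G\to\infty$ on $K$, $G$ bounded off $U$, and $c$ large), at which point the auxiliary factor $h_0$ and the exponent $m$ are unnecessary -- and you have rederived the paper's construction. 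So the proposal is not wrong in spirit, but it leaves the essential step unproved.
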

	\begin{proof}
		Let $V$ be an open set such that $K\subseteq\overline{V}\subseteq U$ and $n\geq2$ be an integer such that
		\begin{equation}\label{sin}
			\sin\left(\frac{\pi}{n}\right)<\varepsilon.
		\end{equation}
		Since $K$ has arclength measure 0, we can construct a function $u_1$ in $C^\infty(\T\setminus K)$ taking values in $[1,\infty)$ such that $\lim u_1(z)=+\infty$ as $z\to w$ for any $w\in K$ and $u_1\equiv1$ on $\T\setminus\overline{V}$. Moreover, we can assume that
		\begin{equation}\label{int_1}
			\frac{1}{2\pi}\int_{\T}u_1(\zeta)|d\zeta|<1+\frac{\mathrm{dist}\left(\overline{\D}\setminus U,\T\cap \overline{V}\right)}{2}.
		\end{equation}
		Let $M>1$ be such that
		\begin{equation}\label{Mcond}
			\frac{2}{\sqrt[n]{M-1}-2}<\varepsilon.
		\end{equation} 
		We can also construct a function $u_2$ in $C^\infty(\T)$ taking values in $[1,M]$ such that $u_2\equiv M$ on $\T\setminus\overline{V}$ and $u_2\equiv1$ on $K$. Moreover, we can assume that
		\begin{equation}\label{int_2}
			\frac{1}{2\pi}\int_{\T}u_2(\zeta)|d\zeta|>M-\frac{\mathrm{dist}\left(\overline{\D}\setminus U,\T\cap \overline{V}\right)}{2}.
		\end{equation}
		We can then extend $u_1,u_2$ harmonically on $\D$ (by a slight abuse of notation, we will use the symbols $u_1,u_2$ also for these extensions) noting that $u_1\in C^\infty(\overline{\D}\setminus K)$ and $u_2\in C^\infty(\overline{\D})$. Let $\tilde{u_1}$ (respectively $\tilde{u_2}$) be the harmonic conjugate of $u_1$ (respectively $u_2$) with $\tilde{u_1}(0)=0$ (respectively $\tilde{u_2}(0)=0$). We note that $\tilde{u_1}\in C^\infty(\overline{\D}\setminus K)$ and $\tilde{u_2}\in C^\infty(\overline{\D})$ since the Hilbert transform preserves local smoothness.
		
		Since the functions $u_1,\tilde{u_1}\in C^\infty(\overline{\D}\setminus K)$ are harmonic conjugates, the function $g_1:\D\to\C$ given by $\D\ni z\mapsto u_1(z)+i\tilde{u_1}(z)$ is holomorphic on $\D$ and extends continuously to $\T\setminus K$. The function $g_1-1$ has non-negative real part on $\D$, so, by the Herglotz Representation Theorem, we have, for any $z\in\D\setminus U$:
		\begin{align*}
			\begin{autobreak}
				\MoveEqLeft[0]
				|g_1(z)-1|=\left|i\tilde{u_1}(0)+\frac{1}{2\pi}\int_{\T}(u_1(z)-1)\frac{\zeta+z}{\zeta-z}|d\zeta|\right|
				\leq\frac{1}{2\pi}\int_{\T}(u_1(z)-1)\left|\frac{\zeta+z}{\zeta-z}\right||d\zeta|
				\leq \frac{1}{\pi\cdot\mathrm{dist}\left(\overline{\D}\setminus U,\T\cap \overline{V}\right)}\int_{\T}(u_1(z)-1)|d\zeta|
			\end{autobreak}
		\end{align*}
		and so, by \eqref{int_1}, we get $|g_1(z)-1|<1$ for any $z\in\D\setminus U$. By continuity, we get
		\begin{equation}\label{ineq_g_1}
			|g_1(z)-1|\leq1
		\end{equation}
		for any $z\in\overline{\D}\setminus U$. Similarly, the function $g_1:\D\to\C$ given by $\D\ni z\mapsto u_1(z)+i\tilde{u_1}(z)$ is holomorphic on $\D$ and extends continuously to $\T$. Moreover, by \eqref{int_2}, we get
		\begin{equation}\label{ineq_g_2}
			|g_2(z)-M|\leq1
		\end{equation}
		for $z\in\overline{\D}\setminus U$.
		
		Now, let $j=1,2$, then $g_j$ takes values in the half-plane $\{z\in\C:\Re(z)\geq1\}$, so there exists a single-valued $n$-th root of $g_j$ that takes values in $C_{\frac{\pi}{2n}}$, which we denote by $h_j$. Finally, let $h:\D\to\C$ be the function given by $h(z)=\frac{h_1(z)}{h_1(z)+h_2(z)}$ for $z\in\D$. We clearly have that $h$ is holomorphic on $\D$ and extends continuously to $\T$ and satisfies (b).
		
		Using the fact that the functions $h_1$ and $h_2$ take values in $C_{\frac{\pi}{2n}}$, we can see geometrically that $h$ takes values in $C_{\frac{\pi}{n}}\cap\D$ (and thus (c) is also satisfied). This implies that $|\Im(h)|<\sin\left(\frac{\pi}{n}\right)$ on $\overline{\D}$ and so (d) follows by \eqref{sin}. Finally, fix $z\in\overline{\D}\setminus U$ from \eqref{ineq_g_1} and \eqref{ineq_g_2}, we get respectively
		$$|h_1(z)|\leq2\ \text{ and }\ |h_2(z)|\geq\sqrt[n]{M-1}$$
		and so
		$$|h(z)|=\left|\frac{h_1(z)}{h_1(z)+h_2(z)}\right|\leq\frac{|h_1(z)|}{|h_2(z)|-|h_1(z)|}\leq\frac{2}{\sqrt[n]{M-1}-2}$$
		and (a) follows by \eqref{Mcond}.
	\end{proof}
	\begin{lemma}\label{approxRC1}
		Let $K$ be a compact subset of $\T$ of arclength measure 0 and $\vp\in BC(K)$. Let also $U$ an open subset of $\C$ containing $K$ and $\varepsilon>0$. Then, there exists $g$ in the closed unit ball of $A(\D)$ such that
		$$\norm*{g-\vp}_K<\varepsilon\ \text{ and }\ \norm*{g-1}_{\overline{\D}\setminus U}<\varepsilon.$$
	\end{lemma}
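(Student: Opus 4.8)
The plan is to combine the \hyperref[rcthm]{Rudin-Carleson Theorem} with Lemma \ref{RCri} through an explicit interpolation formula, and then to correct the supremum norm by a small rescaling. First I would apply the \hyperref[rcthm]{Rudin-Carleson Theorem} with $T=\overline{\D}$: since $\vp\in BC(K)$ we have $\vp(K)\subseteq\overline{\D}$, so there is $\Phi\in A(\D)$ with $\norm*{\Phi}_\infty\leq1$ and $\Phi\equiv\vp$ on $K$. Next, fix a small auxiliary parameter $\delta\in(0,1)$ (to be pinned down as a fixed multiple of $\varepsilon$ only at the very end) and apply Lemma \ref{RCri} to $U$ and $\delta$, obtaining $h\in A(\D)$ with $|h|<\delta$ on $\overline{\D}\setminus U$, $h\equiv1$ on $K$, $\Re(h)\geq0$ and $|\Im(h)|<\delta$ on $\overline{\D}$; note also, from its construction in the proof of Lemma \ref{RCri}, that $h$ maps $\D$ into $C_{\pi/n}\cap\D$, so $|h|\leq1$ on $\overline{\D}$. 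I would then set
$$g_0:=1-h(1-\Phi)\in A(\D).$$
On $K$ we have $h\equiv1$, hence $g_0\equiv\vp$ there; on $\overline{\D}\setminus U$ we have $|g_0-1|=|h|\,|1-\Phi|\leq2\delta$. So $g_0$ already realizes both approximations, and the only remaining issue is membership in the closed unit ball.

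The crux is the estimate $\norm*{g_0}_\infty\leq1+3\delta$. Expanding $|g_0|^2=1-2\Re\big(h(1-\Phi)\big)+|h|^2\,|1-\Phi|^2$ and using the elementary fact that $1-\Phi$ takes values in the disc $\overline{D(1,1)}=\{w\in\C:|w|^2\leq2\Re(w)\}$ (which holds precisely because $\norm*{\Phi}_\infty\leq1$), one gets
$$|g_0|^2\leq1+2\,\Re(1-\Phi)\big(|h|^2-\Re(h)\big)+2\,\Im(h)\,\Im(1-\Phi).$$
Since $0\leq\Re(h)\leq|h|\leq1$, we have $|h|^2-\Re(h)=-\Re(h)\big(1-\Re(h)\big)+\Im(h)^2\leq\delta^2$; combining this with $\Re(1-\Phi)\leq2$ and $|\Im(1-\Phi)|=|\Im(\Phi)|\leq1$ yields $|g_0|^2\leq1+4\delta^2+2\delta\leq1+6\delta$, whence $\norm*{g_0}_\infty\leq1+3\delta$. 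Then $g:=g_0/(1+3\delta)$ belongs to the closed unit ball of $A(\D)$ and satisfies $\norm*{g-g_0}_\infty\leq3\delta$.

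It then remains only to collect estimates: on $K$ one has $g-\vp=g-g_0$, so $\norm*{g-\vp}_K\leq\norm*{g-g_0}_\infty\leq3\delta$, while $\norm*{g-1}_{\overline{\D}\setminus U}\leq\norm*{g-g_0}_\infty+\norm*{g_0-1}_{\overline{\D}\setminus U}\leq3\delta+2\delta$; choosing $\delta=\min\{\varepsilon/5,\tfrac12\}$ finishes the proof. I expect the norm bound of the middle paragraph to be the only genuine obstacle: it is crucially not enough for $h$ merely to be a peak function (property (a) of Lemma \ref{RCri}); one must exploit the sharper properties (b)--(d) — that $h$ is exactly $1$ on $K$, has non-negative real part, and has tiny imaginary part throughout $\overline{\D}$ — together with the elementary geometry of the values of $1-\Phi$.
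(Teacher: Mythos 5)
Your proof is correct, but it takes a genuinely different route from the paper. The paper argues multiplicatively: after reducing to non-vanishing $\vp$ by density, it writes $\vp=e^{\vp_0}$ with $\Re(\vp_0)\leq0$, extends $\vp_0$ by the \hyperref[rcthm]{Rudin-Carleson Theorem}, and sets $g_1=e^{h\vp_0}$, using (c) and (d) of Lemma \ref{RCri} to show $\Re(h\vp_0)<\delta m$ and hence $|g_1|<e^{\delta m}$, before rescaling by $e^{-\delta m}$. You instead interpolate additively, $g_0=1-h(1-\Phi)=(1-h)\cdot1+h\Phi$, and control $\norm*{g_0}_\infty$ via the identity $|g_0|^2\leq1+2\Re(1-\Phi)\bigl(|h|^2-\Re(h)\bigr)+2\Im(h)\Im(1-\Phi)$ together with $|1-\Phi|^2\leq2\Re(1-\Phi)$; your estimates here all check out (including the implicit use of $\Re(1-\Phi)\geq0$ when multiplying through). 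Your route buys a cleaner argument: no logarithm, no reduction to non-vanishing $\vp$, and an explicit linear formula in place of an exponential. The one caveat is that you need $|h|\leq1$ on $\overline{\D}$ (otherwise $|h|^2-\Re(h)$ is not controlled by (c) and (d) alone), and this is \emph{not} part of the stated conclusion of Lemma \ref{RCri}; you correctly extract it from the construction in its proof, where $h$ is shown to take values in $C_{\pi/n}\cap\D$, but to make your argument self-contained one should add ``$|h|\leq1$ on $\overline{\D}$'' as an item (e) to that lemma (it costs nothing). A last cosmetic point: with $\delta=\varepsilon/5$ your final bounds are $\leq\varepsilon$ rather than $<\varepsilon$ at one step, so take $\delta<\varepsilon/5$ strictly.
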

	\begin{proof}
		We assume that $\vp$ does not vanish on $K$ and then the full result follows by the density of the non-vanishing continuous functions in $C(K)$. The function $\vp$ then has a continuous logarithm, that is, there exists $\vp_0\in C(K)$ such that $\vp=e^{\vp_0}$ on $K$. Since $|\vp|\leq1$ on $K$, we have that $\Re(\vp_0)\leq0$ on $K$. Moreover, there exists $m>0$ such that $|\Im(\vp_0)|\leq m$ on $K$. By the \hyperref[rcthm]{Rudin-Carleson Theorem}, $\vp_0$ extends to a function in $A(\D)$ with $\Re(\vp_0)\leq0$ and $|\Im(\vp_0)|\leq m$ on $\overline{\D}$ (where, by a slight abuse of notation, we use the symbol $\vp_0$ also for the extension).
		
		Let $\delta>0$ to be chosen later, then we apply Lemma \ref{RCri} with $\delta$ in place of $\varepsilon$ to get a function $h\in A(\D)$ such that
		\begin{equation}\label{small}
			|h|<\delta \text{ on } \overline{\D}\setminus U,
		\end{equation}
		\begin{equation}\label{equal1}
			h\equiv1 \text{ on } K,
		\end{equation}
		\begin{equation}\label{rpos}
			\Re(h)\geq0 \text{ on } \overline{\D},
		\end{equation}
		\begin{equation}\label{ismall}
			|\Im(h)|<\delta \text{ on } \overline{\D}.
		\end{equation}
		We consider the function $g_0=h\vp_0\in A(\D)$ and $g_1=e^g\in A(\D)$. By \eqref{equal1}, we have that $g_0=\vp_0$ on $K$ and so 
		\begin{equation}\label{equal_K}
			g_1=\vp \text{ on } K.
		\end{equation}
		Next, we have, for $z\in\D$,
		$$\Re(g_0(z))=\Re(h(z))\Re(\vp_0(z))-\Im(h(z))\Im(\vp_0(z))\leq|\Im(h(z))\Im(\vp_0(z))|<\delta m$$
		where we used \eqref{rpos} and \eqref{ismall}. Thus, we get
		\begin{equation}\label{g_1_bound}
			|g_1|<e^{\delta m}\text{ on }\overline{\D}.
		\end{equation}
		Using \eqref{small}, we get, for $z\in\overline{\D}\setminus U$,
		\begin{equation*}
			|g_0(z)|<\delta\norm*{\vp_0}_{\overline{\D}\setminus U}
		\end{equation*}
		and so
		\begin{align}\label{boundrest}
			\begin{autobreak}
				\MoveEqLeft[0]
				|g_1(z)-1|
				=\left|e^{\Re(g_0(z))+i\Im(g_0(z))}-1\right|
				\leq e^{\Re(g_0(z))}\left|e^{i\Im(g_0(z))}-1\right|+\left|e^{\Re(g_0(z))}-1\right|
				\leq e^{|g_0(z)|}\cdot2\sin\left(\frac{\Im(g_0(z))}{2}\right)+e^{|g_0(z)|}-1
				<e^{\delta\norm*{\vp_0}_{\overline{\D}\setminus U}}\delta\norm*{\vp_0}_{\overline{\D}\setminus U}+e^{\delta\norm*{\vp_0}_{\overline{\D}\setminus U}}-1
			\end{autobreak}
		\end{align}
		Now, consider the function $g:=e^{-\delta m}g_1\in A(\D)$. Clearly, \eqref{g_1_bound} implies that $g$ belongs to the closed unit ball of $A(\D)$. By \eqref{equal_K}, we get
		$$\norm*{g-\vp}_K\leq1-e^{-\delta m}.$$
		Similarly, by \eqref{boundrest}, we get
		$$\norm*{g-1}_{\overline{\D}\setminus U}\leq e^{-\delta m}\left(e^{\delta\norm*{\vp_0}_{\overline{\D}\setminus U}}\delta\norm*{\vp_0}_{\overline{\D}\setminus U}+e^{\delta\norm*{\vp_0}_{\overline{\D}\setminus U}}-1\right)+1-e^{-\delta m}.$$
		Finally, we note that by taking $\delta$ arbitrarily close to 0, the right-hand sides in the last two inequalities also become arbitrarily close to 0 and the proof is complete.
	\end{proof}
	Our next result generalises the previous.
	\begin{prop}\label{approxRC}
		Let $K$ be a compact subset of $\T$ of arclength measure 0 and $\vp\in BC(K)$. Let also $U$ an open subset of $\C$ containing $K$ and $f$ a function in the closed unit ball of $A(\D)$ such that $|f|\equiv1$ on $K$. Finally, let $\varepsilon>0$. Then, there exists $g$ in the closed unit ball of $A(\D)$ such that
		$$\norm*{g-\vp}_K<\varepsilon\ \text{ and }\ \norm*{g-f}_{\overline{\D}\setminus U}<\varepsilon.$$
	\end{prop}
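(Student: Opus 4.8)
The plan is to deduce this from Lemma \ref{approxRC1} by dividing out $f$, in exact analogy with the way Theorem \ref{sim_approx_circle} was obtained from Proposition \ref{simul_approx1}. Since $|f|\equiv1$ on $K$, the function $f$ is non-vanishing on $K$, so the quotient $\vp/f$ is a well-defined continuous function on $K$; moreover $|\vp/f|=|\vp|\le1$ there, so $\vp/f\in BC(K)$. This is the only place where the hypothesis $|f|\equiv1$ on $K$ gets used.

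I would then fix $\delta\in(0,\varepsilon)$ and apply Lemma \ref{approxRC1} with $\vp/f$ in place of $\vp$ and $\delta$ in place of $\varepsilon$. This yields a function $g_1$ in the closed unit ball of $A(\D)$ with
$$\norm*{g_1-\frac{\vp}{f}}_K<\delta\quad\text{and}\quad\norm*{g_1-1}_{\overline{\D}\setminus U}<\delta.$$
Setting $g:=fg_1$, the function $g$ is a product of two elements of the closed unit ball of $A(\D)$ and hence lies in that ball as well. For the two required estimates, on $K$ we use $|f|\equiv1$ to get
$$\norm*{g-\vp}_K=\norm*{f\left(g_1-\frac{\vp}{f}\right)}_K=\norm*{g_1-\frac{\vp}{f}}_K<\delta<\varepsilon,$$
and on $\overline{\D}\setminus U$ we use $\norm*{f}_\infty\le1$ (as $f$ lies in the closed unit ball of $A(\D)$) to get
$$\norm*{g-f}_{\overline{\D}\setminus U}=\norm*{f(g_1-1)}_{\overline{\D}\setminus U}\le\norm*{g_1-1}_{\overline{\D}\setminus U}<\delta<\varepsilon,$$
which completes the argument.

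Since this is essentially a one-step reduction, I do not anticipate any genuine obstacle. The only points meriting a moment's care are the verification that $\vp/f$ really is an element of $BC(K)$ (using $|f|\equiv1$ on $K$ both to keep the quotient continuous and to bound its modulus by $1$) and the observation that multiplication by $f$ keeps $g$ inside the closed unit ball of $A(\D)$, which is immediate from $\norm*{f}_\infty\le1$.
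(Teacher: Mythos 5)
Your proposal is correct and follows exactly the paper's own argument: apply Lemma \ref{approxRC1} to the quotient $\vp/f$ (which lies in $BC(K)$ since $|f|\equiv1$ on $K$) and multiply the resulting function by $f$, using $\norm*{f}_\infty\leq1$ for the estimate off $U$. Your write-up simply spells out the verifications the paper leaves as ``we easily see.''
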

	\begin{proof}[Proof of Theorem \ref{approxRC}]
		By Lemma \ref{approxRC1}, there exists $g$ in the closed unit ball of $A(\D)$ such that
		$$\norm*{g_0-\frac{\vp}{f}}_K<\varepsilon\ \text{ and }\ \norm*{g_0-1}_{\overline{\D}\setminus U}<\varepsilon.$$
		Using the fact that $\norm*{f}_{\overline{\D}\setminus U}\leq1$ and $|f|\equiv1$ on $K$, we easily see that the function $g:=fg_0$ satisfies the desired properties.
	\end{proof}
	Finally, we are ready for the proof of Theorem \ref{sim_approx_disc}.
	\begin{proof}[Proof of Theorem \ref{sim_approx_disc}]
		Taking into account \hyperref[caratheo]{Caratheodory's Theorem}, it is enough to assume that $f$ is a finite Blaschke product, and so it belongs to the closed unit ball of $A(\D)$ and satisfies $|f|\equiv1$ on $K$.
		
		By Proposition \ref{approxRC}, there exists $g$ in the closed unit ball of $A(\D)$ such that
		$$\norm*{g-\vp}_K<\frac{\varepsilon}{3}\ \text{ and }\ \norm*{g-f}_{L}<\frac{\varepsilon}{2}.$$
		The function $g$ is uniformly continuous on $\overline{\D}$, so there exists $0<\delta<1$ such that $|g(z)-g(w)|<\frac{\varepsilon}{3}$ whenever $z,w\in\overline{\D}$ with $|z-w|<\delta$. Thus, by setting $r_0=1-\delta$, we have that
		$$\norm*{g_r-g}_K<\frac{\varepsilon}{3}\text{ for any }r\in(r_0,1).$$
		Let $r\in(r_0,1)$. By applying \hyperref[caratheo]{Caratheodory's Theorem} for the function $g$ on the compact set $L\cup rK$, we can get a finite Blashcke product $B$ such that
		$$\norm*{B_r-g_r}_K<\frac{\varepsilon}{3}\ \text{ and }\ \norm*{B-g}_L<\frac{\varepsilon}{2}.$$
		The result follows by the triangle inequality.
	\end{proof}
	\subsection{Approximation by singular inner functions}
	To finish off this section, we prove a simultaneous approximation result involving singular inner functions that is a combination of analogues of theorems \ref{sim_approx_circle} and \ref{sim_approx_disc}. Let $\mathcal{S}$ denote the set of zero-free elements of $\mathcal{B}$. We recall that singular inner functions are functions $S\in\mathcal{S}$ of the form
	$$S(z)=e^{-\int_{\T}\frac{w+z}{w-z}d\mu(w)},z\in\D$$
	for some singular (with respect to the arclength measure) measure $\mu$ on $\T$.
	\begin{theorem}\label{sim_approx_sing}
		Let $K$ be a compact subset of $\T$ of arclength measure $0$ and $L$ be a compact subset of $\D$. Let also $\vp_1$ and $\vp_2$ be two continuous function on $K$ with $|\vp_1|\equiv1$ on $K$ and $\vp_2\in BC(K)$ and $f\in\mathcal{S}$. Finally, let $\varepsilon>0$. Then,
		\begin{enumerate}[(a)]
			\item there exists a singular inner function $S$ that extends continuously to $K$ such that
			$$\norm*{S-\vp_1}_K<\varepsilon\ \text{ and }\ \norm*{S-f}_L<\varepsilon;$$
			\item there exists $r_0\in(0,1)$ such that, for any $r\in(r_0,1)$, there exists a singular inner function $S$ such that
			$$\norm*{S_r-\vp_2}_K<\varepsilon\ \text{ and }\ \norm*{S-f}_L<\varepsilon.$$
		\end{enumerate}
	\end{theorem}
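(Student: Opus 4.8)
The plan is to re-run the proofs of Theorems \ref{sim_approx_circle} and \ref{sim_approx_disc}, with the two classical inputs replaced by analogues for singular inner functions. It is convenient to allow a unimodular constant in front of a singular inner function (so ``singular inner function'' below means $\zeta S$ with $\zeta\in\T$ and $S$ as in the definition); since $\abs{\vp_1}\equiv 1$ on $K$ and $\mathcal{B},L$ are rotation invariant this is harmless. Two ingredients are needed, and I expect all the difficulty to lie in the second.

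\emph{Caratheodory-type statement.} Singular inner functions are dense in $\mathcal{S}$ in the topology of locally uniform convergence on $\D$, with the approximants chosen analytic across a prescribed neighbourhood of $K$ (hence unimodular on $K$). Indeed, $f\in\mathcal{S}$ has trivial Blaschke factor, so the inner--outer factorisation gives $f=\zeta\exp\bigl(-\int_\T\tfrac{w+z}{w-z}\,d\nu(w)\bigr)$ for some $\zeta\in\T$ and a finite positive Borel measure $\nu$ on $\T$ (the outer factor contributing the absolutely continuous piece $-\log\abs{f^{*}}\,\tfrac{\abs{dw}}{2\pi}\ge0$). As $K$ has measure zero, hence is nowhere dense, $\nu$ can be approximated in the weak-$*$ topology by discrete positive measures $\nu_n$ of equal total mass whose finitely many atoms avoid $K$; the corresponding Herglotz integrals converge locally uniformly on $\D$, so the functions $\zeta\exp\bigl(-\int_\T\tfrac{w+z}{w-z}\,d\nu_n(w)\bigr)$ converge to $f$ locally uniformly, each being a unimodular constant times a singular inner function, analytic on $\C$ away from its atoms.

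\emph{Fisher-type statement (the main obstacle).} For $\psi\in C(K)$ with $\abs\psi\equiv 1$, a compact $L\subseteq\D$ and $\varepsilon>0$, there is a singular inner function $S$ analytic across a neighbourhood of $K$ with $\norm*{S-\psi}_K<\varepsilon$ and $\norm*{S-1}_L<\varepsilon$. Here is the approach I would take. As $K$ is a compact proper subset of $\T$ it lies in an arc, so $\psi=e^{i\alpha}$ with $\alpha\in C(K)$ real; by a smooth form of the \hyperref[rcthm]{Rudin--Carleson Theorem} (available because $K$ has measure zero, in the spirit of Lemma \ref{RCri}), extend $i\alpha$ to $\psi_0\in A(\D)$ whose range lies in a thin rectangle in the closed left half-plane $\{\,\zeta:-\delta\le\Re\zeta\le 0,\ \abs{\Im\zeta}\le M\,\}$, with $\delta$ small and $\psi_0$ smooth near $K$. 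Then $g:=e^{\psi_0}$ is zero-free on $\overline\D$, hence a constant times an outer function, with $g=\psi$ on $K$, $e^{-\delta}\le\abs g\le 1$ on $\overline\D$, and $g=\zeta_g\exp\bigl(-\int_\T\tfrac{w+z}{w-z}\,\rho(w)\tfrac{\abs{dw}}{2\pi}\bigr)$, where $\rho=-\log\abs{g^{*}}$ satisfies $0\le\rho\le\delta$, $\rho\equiv 0$ on $K$, and $\rho$ is smooth near $K$. One then ``singularises'' the absolutely continuous measure $\rho\,\tfrac{\abs{dw}}{2\pi}$: approximate it weak-$*$ by discrete positive measures $\mu_n$ of equal (small) total mass with atoms off $K$, chosen so as to match the first moments of $\rho$ on a fine partition of $\T$ into arcs. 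The function $S_n:=\zeta_g\exp\bigl(-\int_\T\tfrac{w+z}{w-z}\,d\mu_n(w)\bigr)$ is a singular inner function analytic across a neighbourhood of $K$; it tends to $1$ on $L$ because $\norm{\mu_n}$ is small and its atoms, lying on $\T$, are bounded away from $L$; and on $K$ its values are $\zeta_g e^{-i\widetilde{\mu_n}}$ with $\widetilde{\mu_n}$ the conjugate function (using that $\tfrac{w+z}{w-z}$ is purely imaginary for distinct $w,z\in\T$). The delicate point — the reason this is the hard step — is to arrange $\widetilde{\mu_n}\to\widetilde\rho$ \emph{uniformly on $K$}, so that $S_n\to g=\psi$ uniformly on $K$; this uses crucially that $\rho$ vanishes on $K$ and is regular there, so that the mass of $\mu_n$ near $K$ is both small and, by the moment matching, contributes little to $\widetilde{\mu_n}$ on $K$. (Equivalently one can build $\mu_n$ directly from point masses just off $K$, whose conjugate-function ``spikes'' — large because $\tfrac{w+z}{w-z}$ blows up with a sign change as $w$ crosses $K$ — can be tuned to approximate $-\alpha$ on $K$ with small total mass; the issue is again the mutual interference of the spikes.)

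\emph{Assembly.} Granting these, part (a) follows exactly as in the proof of Theorem \ref{sim_approx_circle}: using the Caratheodory-type statement pick a singular inner function $S_0$, analytic across a neighbourhood of $K$ (so unimodular there), with $\norm*{S_0-f}_L<\varepsilon/2$; apply the Fisher-type statement to $\vp_1/S_0\in C(K)$ to get a singular inner function $S_1$, analytic near $K$, with $\norm*{S_1-\vp_1/S_0}_K<\varepsilon$ and $\norm*{S_1-1}_L<\varepsilon/2$; then $S:=S_0S_1$ is again a singular inner function, analytic near $K$, and $\norm*{S-\vp_1}_K<\varepsilon$, $\norm*{S-f}_L<\varepsilon$ follow by the triangle inequality together with $\abs{S_0}\equiv 1$ on $K$ and $\norm{S_0}_L\le 1$. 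Part (b) follows exactly as in the proof of Theorem \ref{sim_approx_disc}: pick $S_0$ as above with $\norm*{S_0-f}_L<\varepsilon/3$; apply Lemma \ref{approxRC1} to $\vp_2/S_0\in BC(K)$ and an open $U\supseteq K$ with $L\subseteq\overline\D\setminus U$ to obtain $g$ in the closed unit ball of $A(\D)$ with $\norm*{g-\vp_2/S_0}_K$ and $\norm*{g-1}_{\overline\D\setminus U}$ small, noting that $g$ is \emph{zero-free} (it is an exponential in the proof of that lemma), so $g\in\mathcal{S}$; by uniform continuity of $g$ and analyticity of $S_0$ near $K$, choose $r_0$ with $\norm*{g_r-g}_K$ and $\norm*{(S_0)_r-S_0}_K$ small for $r\in(r_0,1)$; for such $r$ apply the Caratheodory-type statement to $g$ on the compact set $L\cup\overline{D(0,r)}\subseteq\D$ to get a singular inner function $S_1$ with $\norm*{S_1-g}_{L\cup\overline{D(0,r)}}$ small, so that $(S_1)_r$ is close to $g_r$ on $K$; then $S:=S_0S_1$ works, the estimate on $K$ using $(S_0S_1)_r=(S_0)_r(S_1)_r$, $\abs{S_0}\equiv 1$ on $K$ and $S_0\cdot(\vp_2/S_0)=\vp_2$, and the estimate on $L$ using $\norm{S_0}_L\le 1$ and $\norm*{g-1}_L$ small.
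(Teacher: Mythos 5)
Your overall architecture is sound, but it rests on a Fisher-type approximation theorem for singular inner functions (uniform approximation of a prescribed unimodular $\psi\in C(K)$ on $K$, simultaneously close to $1$ on $L$) that you do not actually prove. You correctly identify this as ``the main obstacle'' and then leave it as a sketch: the moment-matching/spike-tuning argument for arranging $\widetilde{\mu_n}\to\widetilde{\rho}$ uniformly on $K$ is precisely the delicate point, and as written it is not carried out --- note in particular that near each atom $w_0$ the function $\exp\bigl(-t\,\tfrac{w_0+z}{w_0-z}\bigr)$ winds infinitely often around $\T$ as $z\to w_0$ along the circle, so controlling the superposition of such spikes uniformly on $K$ requires a genuine argument, not just the observation that the total mass is small. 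Since part (a) cannot be assembled without this ingredient, the proposal has a real gap. (Your Caratheodory-type statement, via weak-$*$ discretisation of the Herglotz measure with atoms off $K$, is fine, and part (b) of your assembly would go through granted only that ingredient plus Lemma \ref{approxRC1}.)

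The paper avoids building any new approximation machinery by a single conjugation trick that you may want to adopt. Since $f\in\mathcal{S}$, it has a holomorphic logarithm with $\Re\log(f)\le 0$, so $f_0:=\frac{\log(f)+1}{\log(f)-1}\in\mathcal{B}$ and $f=e^{\frac{f_0+1}{f_0-1}}$; likewise $\vp_0:=\frac{\log(\vp_1)+1}{\log(\vp_1)-1}$ is unimodular on $K$ (the Cayley transform sends the imaginary axis to $\T$) and $\vp_0(K)\cup f_0(L)$ stays away from the point $1$. Applying Theorem \ref{sim_approx_circle} to $\vp_0$ and $f_0$ yields a finite Blaschke product $B$ with $\norm*{B-\vp_0}_K$ and $\norm*{B-f_0}_L$ small, and then $S:=e^{\frac{B+1}{B-1}}$ is \emph{automatically} a singular inner function (zero-free, unimodular a.e.\ on $\T$, with atomic singular measure at the finitely many points of $B^{-1}(1)$), continuous on $K$ because $B$ stays away from $1$ there; uniform continuity of $z\mapsto e^{\frac{z+1}{z-1}}$ on a compact subset of $\overline{\D}$ avoiding $1$ transfers the estimates. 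In effect, the paper's map $B\mapsto e^{\frac{B+1}{B-1}}$ produces exactly the ``tuned spikes'' your sketch is trying to construct by hand, with the interference problem absorbed into Theorem \ref{sim_approx_circle}, which is already proved.
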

	\begin{proof}
		We will only present the proof of (a). Part (b) is proved in a similar way with minor modifications. Since $f\in\mathcal{S}$, there exists a continuous logarithm $\log(f)$ on $\D$ of $f$ that takes values in the half-plane $\{z\in\C:\Re(z)\leq0\}$. Then, the function $f_0:=\frac{\log(f)+1}{\log(f)-1}$ is in $\mathcal{B}$ and satisfies $f=e^{\frac{f_0+1}{f_0-1}}$. Similarly, the function $\vp_0:=\frac{\log(\vp_1)+1}{\log(\vp_1)-1}$ is in $BC(K)$ and satisfies $\vp_1=e^{\frac{\vp_0+1}{\vp_0-1}}$ and $|\vp_0|\equiv1$ on $K$. The set $\vp_0(K)\cup f_0(L)$ is clearly compact and omits the point 1. Let $\delta_0>0$ be such that the set $A:=\{z\in\overline{\D}:\mathrm{dist}(z,\vp_0(K)\cup f_0(L))\leq\delta_0\}$ does not contain the point 1. By the uniform continuity of the function $A\ni z\mapsto e^{\frac{z+1}{z-1}}\in\C$, there exists $\delta>0$ such that $\left|e^{\frac{z+1}{z-1}}-e^{\frac{w+1}{w-1}}\right|<\varepsilon$ for any $z,w\in A$ with $|z-w|<\delta$. We can also clearly assume that $\delta<\delta_0$.
		
		By Theorem \ref{sim_approx_circle}, there exists a finite Blaschke product $B$ such that
		$$\norm*{B-\vp_0}_K<\delta\ \text{ and }\ \norm*{B-f_0}_L<\delta.$$
		We have that $B(K\cup L)\subseteq A$ and so we get
		$$\norm*{S-\vp_1}_K<\varepsilon\ \text{ and }\ \norm*{S-f}_L<\varepsilon$$
		where $S:=e^{\frac{B+1}{B-1}}$. We note that $S$ is a singular inner function. Indeed, it is easy to check that $|S|\equiv1$ almost everywhere on $\T$ and $S$ does not vanish on $\D$. Finally, we see that $S$ extends to $K$ since $B$ extends to $K$ and takes values away from 1 there.
	\end{proof}
	\section{Universal radial approximation}\label{univ}
	Using the results of the previous section, we can prove the existence (and even the abundance) of inner functions with universal radial limits. Bounded holomorphic functions with universal boundary behaviour have also been studied through the lens of composition operators. In 1954, Heins proved that there exists a sequence $(\phi_n)_n$ of automorphisms of $\D$ and an (infinite) Blaschke product $B$ such that the sequence $\left(B\circ\phi_n\right)_n$ is dense in $\mathcal{B}$ with respect to the locally uniform topology. In \cite{GorkinMortini2004}, the previous was generalised in the following sense: let $(z_n)_n$ be a sequence in $\D$ with $|z_n|\to1$ and consider the automorphisms of $\D$ given by $\phi_n(z)=\frac{z+z_n}{1+\overline{z_n}z},z\in\D$ for $n\in\N$. Then, there exists a Blaschke product $B$ such that the sequence $\left(B\circ\phi_n\right)_n$ is dense in $\mathcal{B}$ with respect to the locally uniform topology. The literature on this topic includes various generalisations and further results. We refer the interested reader to \cite{GorkinMortini2004_2,Mortini2007,BayartGrivauxMortini2008,GorkinLeonSaavedraMortini2008,BayartGorkinGrivauxMortini2009} and the references therein. 
	
	In a similar direction, there is a lot of work regarding the possible wild boundary behaviour of holomorphic functions on $\D$. In 1933, Kierst and Szpilrajn proved the existence of holomorphic functions $f$ on $\D$ with the property that the set $\{f_r(\zeta):r\in(0,1)\}$ is dense in $\C$ for any $\zeta\in\T$, that is, the image of any radius under $f$ is a dense curve in the space \cite{KierstSzpilrajn1933} (we recall that $f_r$ denotes the dilate $\T\ni\zeta\mapsto f(r\zeta)\in\C$ for $r\in(0,1)$). In \cite{BernalGonzalezCalderonMorenoPradoBassas2004}, the previous result was extended by replacing ``any radius'' with ``any continuous path in $\D$ that converges to a point on $\T$''. Later, Bayart proved the existence of holomorphic functions $f$ on $\D$ with the property that for any measurable function $\vp$ on $\T$, there exists a sequence $(r_n)_n$ in $(0,1)$ converging to 1 such that $f_{r_n}\to\vp$ almost everywhere on $\T$ \cite{Bayart2005}.
	
	In 2020, Charpentier introduced the so-called Abel universal functions. These are functions in $H(\D)$ with the property that the set $\left\{f_r:r\in(0,1)\right\}$ is dense in $C(K)$ for any proper compact subset $K$ of $\T$ \cite{Charpentier2020}. Analogues were given in Banach spaces of holomorphic functions (see \cite{Maronikolakis2022} for Hardy, Bergman and Dirichlet spaces and \cite{CharpentierEspoullierZarouf2025} for the Bloch space). Importantly, the universal approximation needs to be restricted to a fixed compact set with appropriate properties depending on the space. For example, in the Hardy space $H^p$ for $1\leq p<\infty$, if $K$ is a compact subset of $\T$ of arclength measure 0, then there exists a function $f\in H^p$ such that the set $\left\{f_r:r\in(0,1)\right\}$ is dense in $C(K)$. The proof of this result (as well as the corresponding results of other Banach spaces of holomorphic functions) is based on the simultaneous approximation results mentioned in the introduction. Finally, we mention that the universal functions mentioned in the previous results form dense $G_\delta$ subsets of the corresponding space.
	
	Motivated by this literature, we prove in this section the existence of functions in $\mathcal{B}$ and $\mathcal{S}$ with universal radial limits, which are essentially analogues of the aforementioned Abel universal functions. We also prove that there are (infinite) Blaschke products as well as singular inner functions with this wild boundary behaviour
	
	Before stating our main result, we make some observations. Let $\tau$ denote the topology on $\mathcal{B}$ of locally uniform convergence on $\D$ (equivalently, the topology of uniform convergence on all compact subsets of $\D$). We note that the spaces $(\mathcal{B},\tau)$ and $(\mathcal{S},\tau)$ are Polish. Indeed, we can see this fact by constructing using standard methods a metric with respect to which $(\mathcal{B},\tau)$ is complete and then note that $(\mathcal{S},\tau)$ is a $G_\delta$ subset of $(\mathcal{B},\tau)$ and thus it is also Polish (Theorem 3.11 in \cite{Kechris1995}). For our purposes, the important point is that we have Baire's Theorem at our disposal when working with these spaces.
	\begin{definition}
		Let $K$ be a compact subset of $\T$. We will say that a function $f\in\mathcal{B}$ belongs to the class $\UU_K$ if, for any $\vp\in BC(K)$, there exists $(r_n)_n$ in $(0,1)$ converging to 1 such that
		$$\norm*{f_{r_n}-\vp}_K\to0.$$
	\end{definition}
	Using the aforementioned work on universal Blaschke products, we can easily get that there exists a Blaschke product that belongs to $\UU_K$ in the case where $K$ is ``small''. Indeed, we can consider the case where $K$ is a singleton.
	\begin{prop}
		There exists a Blaschke product that belongs to the set $\UU_{\{1\}}$.
	\end{prop}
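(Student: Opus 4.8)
The plan is to extract such a function directly from the universal Blaschke product theorem of Gorkin and Mortini quoted above. First I would fix a sequence $(z_n)_n$ in the interval $(0,1)$ with $z_n\to1$ and let $\phi_n$ denote the disc automorphism $\phi_n(z)=\frac{z+z_n}{1+z_n z}$, so that $\phi_n(0)=z_n$ for every $n$. Since $|z_n|\to1$, the result of \cite{GorkinMortini2004} recalled above furnishes a Blaschke product $B$ such that the set $\{B\circ\phi_n:n\in\N\}$ is dense in $(\mathcal{B},\tau)$; this $B$ is the claimed function.

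To verify $B\in\UU_{\{1\}}$, fix $\vp\in BC(\{1\})$; this is nothing but a point $c:=\vp(1)$ with $|c|\le1$, and the constant function equal to $c$ belongs to $\mathcal{B}$. The key observation is that $(\mathcal{B},\tau)$ is metrizable (recall from the discussion preceding the statement that it is Polish) and has no isolated points: for $g\in\mathcal{B}$ with $g\not\equiv0$ the functions $\lambda g$ with $\lambda\in\overline{\D}$ are pairwise distinct elements of $\mathcal{B}$ accumulating at $g$ in $\tau$, while $z\mapsto\varepsilon z$ tends to $0$ in $\tau$ as $\varepsilon\to0^+$. Consequently the range of the dense sequence $(B\circ\phi_n)_n$ has every element of $\mathcal{B}$ as the limit of a subsequence whose indices tend to infinity. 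Applying this to the constant $c$ yields a strictly increasing sequence $(n_k)_k$ with $B\circ\phi_{n_k}\to c$ locally uniformly on $\D$; evaluating at $0$ gives $B(z_{n_k})=B(\phi_{n_k}(0))\to c$, and $n_k\to\infty$ forces $z_{n_k}\to1$. Since $\norm*{B_{z_{n_k}}-\vp}_{\{1\}}=|B(z_{n_k})-c|$, the sequence $(z_{n_k})_k$ witnesses $B\in\UU_{\{1\}}$, completing the argument.

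The only step that is not purely formal is passing from mere density of $\{B\circ\phi_n:n\}$ to an approximating subsequence along which $n_k\to\infty$ — equivalently, along which the dilation radii $z_{n_k}$ genuinely approach $1$, as the definition of $\UU_{\{1\}}$ demands. This is exactly what the absence of isolated points in $(\mathcal{B},\tau)$ delivers (inductively, removing the finitely many already-used terms from a dense set in a perfect metric space leaves a dense set, so one can always choose the next index larger). Alternatively one could cite the standard fact that the universal Blaschke products of \cite{GorkinMortini2004} form a dense $G_\delta$ in $(\mathcal{B},\tau)$ and invoke Baire's theorem, but this heavier machinery is unnecessary here.
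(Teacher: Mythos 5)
Your proof is correct and follows essentially the same route as the paper: invoke the Gorkin--Mortini theorem for automorphisms $\phi_n$ with $\phi_n(0)=z_n\to1$ and evaluate the dense sequence $(B\circ\phi_n)_n$ at the origin to get $B(z_n)$ dense in $\overline{\D}$. The only difference is that you explicitly justify the extraction of an approximating subsequence with indices tending to infinity (so that $z_{n_k}\to1$, as the definition of $\UU_{\{1\}}$ requires), a point the paper leaves as ``clearly implies''; this extra care is sound and arguably an improvement.
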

	\begin{proof}
		Let $z_n=\frac{1}{n+1}$ for $n\in\N$ and consider the automorphisms of $\D$ given by $\phi_n(z)=\frac{z+z_n}{1+\overline{z_n}z},z\in\D$ for $n\in\N$. By Theorem 3.1 of \cite{GorkinMortini2004}, there exists a Blaschke product $B$ such that the sequence $\left(B\circ\phi_n\right)_n$ is dense in $\mathcal{B}$ with respect to the locally uniform topology. In particular, the set $\left\{B\circ\phi_n(0):n\in\N\right\}=\left\{B(z_n):n\in\N\right\}$ is dense in $\overline{\D}$. This clearly implies that $B\in\UU_{\{1\}}$ (indeed $\UU_{\{1\}}$ is exactly the class of functions in $\mathcal{B}$ that map the radius that connects 0 with 1 to a dense subset of $\overline{\D}$).
	\end{proof}
	The boundary behaviour of holomorphic functions and in particular Blaschke products can also be studied through the notion of cluster sets. More precisely, if $f$ is a holomorphic function on $\D$ and $\zeta\in\T$, we define the radial cluster set of $f$ at $\zeta$ to be the set
	$$\left\{w\in\C:w=\lim_{n\to\infty}f(r_n\zeta)\text{ where }(r_n)_n\text{ is a sequence in }(0,1)\text{ such that }r_n\to1\right\}.$$
	We refer the interested reader to the classical book of Collingwood and Lohwater \cite{CollingwoodLohwater1966} on cluster sets. If $B$ is a Blaschke product, then we clearly have that the radial cluster set of $B$ at any point $\zeta\in\T$ is a non-empty, compact and connected subset of $\D$. A theorem of Berna, Colwell and Piranian tells us that, for any sequence $(\zeta_n)_n$ in $\T$ and any sequence $(T_n)_n$ of non-empty, compact and connected subsets of $\overline{\D}$, there exists a Blaschke product $B$ that has the radial cluster set $T_n$ at $\zeta_n$ for all $n\in\N$ \cite{BelnaColwellPiranian}. In particular, choosing $T_n=\overline{\D}$ for any $n\in\N$ gives us the existence of a Blaschke product $B$ that belongs to $\UU_{\zeta_n}$ for all $n\in\N$.
	
	Our main result in this section essentially states that we can generalise this to compact sets $K$ of arclength measure 0, where we also have uniformity with respect to $\zeta\in K$.
	\begin{theorem}\label{univ_main}
		Let $K$ be a compact subset of $\T$ of arclength measure 0. Then, we have the following:
		\begin{enumerate}[(a)]
			\item the set $\ \UU_K$ is a dense $G_\delta$ subset of $(\mathcal{B},\tau)$;
			\item there exists an (infinite) Blaschke product that belongs to $\UU_K$;
			\item the set $\ \UU_K\cap\mathcal{S}$ is a dense $G_\delta$ subset of $(\mathcal{S},\tau)$;
			\item there exists a singular inner function that belongs to $\UU_K$.
		\end{enumerate}
	\end{theorem}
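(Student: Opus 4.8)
\emph{Plan.} Since $K$ is compact metric, $BC(K)$ is separable; fix a countable dense set $\{\vp_j\}_{j\in\N}$ in $BC(K)$ and, for $j,m\in\N$, set
\[
A_{j,m}:=\set*{f\in\mathcal{B}:\ \text{there is }r\in(1-\tfrac{1}{m},1)\text{ with }\norm*{f_r-\vp_j}_K<\tfrac{1}{m}}.
\]
A routine argument (approximate an arbitrary $\vp\in BC(K)$ by the $\vp_j$ and extract suitable radii) gives $\UU_K=\bigcap_{j,m}A_{j,m}$, and, intersecting everything with $\mathcal{S}$, $\UU_K\cap\mathcal{S}=\bigcap_{j,m}(A_{j,m}\cap\mathcal{S})$. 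Each $A_{j,m}$ is $\tau$-open: if $f\in A_{j,m}$ with witnessing radius $r<1$, then $\overline{D(0,r)}$ is a compact subset of $\D$ and any $g$ close enough to $f$ uniformly on $\overline{D(0,r)}$ still obeys $\norm*{g_r-\vp_j}_K<\tfrac{1}{m}$. Density of $A_{j,m}$ in $(\mathcal{B},\tau)$ is exactly where Theorem~\ref{sim_approx_disc} enters: given $f\in\mathcal{B}$, a compact $L\subseteq\D$ and $\eta>0$, apply it with $\vp=\vp_j$ and $\e<\min(\tfrac{1}{m},\eta)$ to obtain $r_0$, pick $r\in(\max(r_0,1-\tfrac{1}{m}),1)$, and the finite Blaschke product it produces lies in $A_{j,m}$ and is within $\eta$ of $f$ on $L$. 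As $(\mathcal{B},\tau)$ is Polish, Baire's theorem gives (a). Part (c) is the same, worked inside the Polish space $(\mathcal{S},\tau)$, with Theorem~\ref{sim_approx_sing}(b) — whose output is a singular inner function, hence an element of $\mathcal{S}$ — replacing Theorem~\ref{sim_approx_disc} in the density step.

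For (b), neither Baire's theorem nor (a) helps, because the infinite Blaschke products are $\tau$-nowhere dense in $\mathcal{B}$ (the set $\set*{f\in\mathcal{B}:\norm*{f}_\infty<1}$ is $\tau$-dense and open). I would instead build the required function by hand as a convergent infinite product $B=\prod_nC_n$ of finite Blaschke products. Fix a summable sequence $\e_n>0$ with $\e_n<\tfrac{1}{2}$, enumerate $\{\vp_j\}$ as a sequence $(\psi_n)_n$ in which each $\vp_j$ appears infinitely often, and construct inductively finite Blaschke products $C_n$, partial products $P_n:=C_1\cdots C_n$ with $P_0:=1$, and radii $1-\tfrac{1}{n}<r_n<\rho_n<1$ with $(\rho_n)_n$ increasing, so that at stage $n$: (i) $\norm*{C_n-1}_{\overline{D(0,\rho_{n-1})}}<\e_n$ and $C_n$ has a zero; and (ii) $\norm*{(P_n)_{r_n}-\psi_n}_K<\e_n$. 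From (i), $(P_n)_n$ is $\tau$-Cauchy, so $P_n\to B\in\mathcal{B}$ locally uniformly with $\norm*{B-P_n}_{\overline{D(0,\rho_n)}}\le\sum_{k>n}\e_k$; also (i) forces $-\log\abs*{C_n(0)}<2\e_n$, and since $-\log\abs*{C_n(0)}\ge\sum_w(1-\abs{w})$ over the zeros $w$ of $C_n$, the zeros of all the $C_n$ together form a Blaschke sequence, so $B$ is, up to a unimodular constant, the Blaschke product with those zeros — an \emph{infinite} Blaschke product since each $C_n$ contributes one. Finally, for a fixed $\vp_j$ and each of the infinitely many stages $n$ with $\psi_n=\vp_j$, condition (ii) together with $r_n<\rho_n$ gives $\norm*{B_{r_n}-\vp_j}_K\le\norm*{B-P_n}_{\overline{D(0,r_n)}}+\e_n\to0$ along these stages, with $r_n\to1$; hence $B\in\UU_K$.

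The one delicate step is achieving (ii) without spoiling (i). Since $(P_n)_{r_n}=(P_{n-1})_{r_n}(C_n)_{r_n}$, and the finite Blaschke product $P_{n-1}$ satisfies $\abs*{(P_{n-1})_r}\to1$ uniformly on $K$ as $r\to1$ while $(P_{n-1})_r$ itself need not be close to $1$ on $K$, the factor $C_n$ must be chosen with $(C_n)_{r_n}$ close on $K$ to a ``corrected target'' $(1-\delta_n)\psi_n/(P_{n-1})_{r_n}$ (the factor $1-\delta_n$, $\delta_n\to0$, being a slight shrinkage keeping the target in $BC(K)$ once $\abs*{(P_{n-1})_{r_n}}>1-\delta_n$ on $K$) — and this target depends on the radius $r_n$ not yet chosen. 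I would break the circularity by passing to the limit: $(P_{n-1})_r$ converges uniformly on $K$, as $r\to1$, to the boundary values of $P_{n-1}$, of modulus $1$, so $(1-\delta_n)\psi_n/(P_{n-1})_r$ converges uniformly on $K$ to a fixed $\vp^\star\in BC(K)$; apply Theorem~\ref{sim_approx_disc} to $\vp^\star$ with $f\equiv1$, $L=\overline{D(0,\rho_{n-1})}$ and a small tolerance, obtain the associated $r_0$, and then pick $r_n$ exceeding $r_0,\rho_{n-1},1-\tfrac{1}{n}$ and so close to $1$ that $\abs*{(P_{n-1})_{r_n}}>1-\delta_n$ on $K$ and $(1-\delta_n)\psi_n/(P_{n-1})_{r_n}$ is close enough to $\vp^\star$ on $K$; the Blaschke product $C_n$ returned then satisfies (i) — after appending, if needed, one extra Blaschke factor $\tfrac{z-a}{1-\overline{a}z}$ with a zero $a$ so close to $\T$ that, suitably rotated, it is uniformly near $1$ on $\overline{D(0,r_n)}$, to force a zero without affecting (i) or (ii) — and multiplying by $(P_{n-1})_{r_n}$ yields (ii). Choosing $\rho_n\in(r_n,1)$ closes the induction. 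This trade-off between being uniformly close to $1$ deep inside $\D$ and realising prescribed dilated values on $K$ is, I expect, the main obstacle; the rest is just tracking constants.

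Finally, I would derive (d) from (b), mimicking the proof of Theorem~\ref{sim_approx_sing}(a). Let $B$ be the infinite Blaschke product from (b); being non-constant, $B$ maps $\D$ into $\D$, so $1\notin B(\D)$ and $S:=e^{\frac{B+1}{B-1}}$ is holomorphic and zero-free on $\D$ with $\abs*{S}\equiv1$ a.e.\ on $\T$ (as $\frac{B+1}{B-1}$ is purely imaginary a.e.\ on $\T$), i.e.\ a singular inner function. To get $S\in\UU_K$ it suffices, by density of the non-vanishing functions in $BC(K)$ and a diagonal argument, to approximate each non-vanishing $\vp\in BC(K)$; picking a continuous logarithm of $\vp$ with non-positive real part and putting $\eta:=\frac{\log\vp+1}{\log\vp-1}\in BC(K)$, we have $\vp=e^{\frac{\eta+1}{\eta-1}}$ on $K$ with $\eta(K)$ compact and omitting $1$. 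By $B\in\UU_K$ there are $r_n\to1$ with $B_{r_n}\to\eta$ uniformly on $K$, so eventually $B_{r_n}(K)$ lies in a fixed compact set $A$ omitting $1$, and uniform continuity of $z\mapsto e^{\frac{z+1}{z-1}}$ on $A$ gives $S_{r_n}=e^{\frac{B_{r_n}+1}{B_{r_n}-1}}\to\vp$ uniformly on $K$, as wanted.
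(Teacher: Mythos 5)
Your proposal is correct, and for parts (a), (b) and (c) it follows essentially the same route as the paper: the same Baire-category argument over the open dense sets $\cup_{r}U_{m,n,r}$ (your $A_{j,m}$) fed by Theorem \ref{sim_approx_disc} (resp.\ Theorem \ref{sim_approx_sing}(b) inside $(\mathcal{S},\tau)$), and the same inductive infinite product $\prod_k B^{(k)}$ for (b), including the same resolution of the radius/target circularity — the paper fixes the target as $\vp_{k+1}/(B^{(0)}\cdots B^{(k)})$ using the unimodular boundary values of the finite partial product and controls the discrepancy with its dilates via a uniform-continuity estimate, which is exactly your ``pass to the limit $r\to1$'' device. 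Two points where you genuinely diverge, both to your credit: in (b) you explicitly force each factor to contribute a zero and verify that the union of zeros is a Blaschke sequence with no residual singular factor, whereas the paper simply asserts that the convergent product is an infinite Blaschke product; and for (d) the paper redoes the inductive construction with singular inner functions (``similar to (b) with obvious modifications''), while you obtain the singular inner function for free by transporting the universal Blaschke product of (b) through $B\mapsto e^{\frac{B+1}{B-1}}$, exactly as in the proof of Theorem \ref{sim_approx_sing}(a). Your derivation of (d) is shorter and reuses (b) rather than repeating it; the only facts it needs — density of non-vanishing functions in $BC(K)$, existence of continuous logarithms on the totally disconnected set $K$, and uniform continuity of $z\mapsto e^{\frac{z+1}{z-1}}$ on compact subsets of $\overline{\D}$ omitting $1$ — are all already used elsewhere in the paper, so nothing new is being assumed.
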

	\begin{proof}
		(a). The set $BC(K)$ is separable (as it is a subset of the separable metric space $C(K)$). Let $(\vp_m)_m$ be a dense sequence in $BC(K)$. For any $m,n\in\N$ and $r\in(0,1)$ we denote by $U_{m,n,r}$ the set
		$$\left\{f\in\mathcal{B}:\norm*{f_r-\vp_m}_K<\frac{1}{n+1}\right\}.$$
		It is easy to check that $\mathcal{U}_K=\cap_{t\in(0,1)}\cap_{m,n\in\N}\cup_{r\in(t,1)}U_{m,n,r}$. Thus, the proof will be complete if we show that, for any $t\in(0,1)$ and $m,n\in\N$, the set $\cup_{r\in(t,1)}U_{m,n,r}$ is open and dense in $\mathcal{B}$. The openness is also easy to see (indeed the sets $U_{m,n,r}$ are open for any $r\in(0,1)$ by the definition of the locally uniform topology). Thus, it remains to show that $\cup_{r\in(t,1)}U_{m,n,r}$ is dense for fixed $t\in(0,1)$ and $m,n\in\N$.
		
		Let $g\in\mathcal{B}$ and $L$ be a compact subset of $\D$. Let also $\varepsilon>0$. Then, by Theorem \ref{sim_approx_disc}, there exists a finite Blaschke product $B$ and $r\in(t,1)$ such that
		$$\norm*{B_r-\vp_m}_K<\frac{1}{n+1}\ \text{ and }\ \norm*{B-g}_L<\varepsilon$$
		and so
		$$B\in\cup_{r\in(t,1)}U_{m,n,r}\ \text{ and }\ \norm*{B-g}_L<\varepsilon$$
		which completes the proof.
		
		(b). We will construct the desired Blaschke product inductively. By Theorem \ref{sim_approx_disc}, we get a finite Blaschke product $B^{(0)}$, and $r_0\in(0,1)$ such that
		\begin{equation}
			\norm*{B^{(0)}_{r_0}-\vp_0}_K<1.
		\end{equation}
		Now, for some $k\in\N$, assume that we have found finite Blaschke products $B^{(0)},B^{(1)},\dots,B^{(k)}$ and $0<r_0<r_1<\dots<r_k<1$. The function $B^{(0)}\cdots B^{(k)}$ is a finite Blaschke product, so it has a finite number of zeros in $\D$. Thus, there exists $R\in(0,1)$ such that the function $\frac{1}{B^{(0)}\cdots B^{(k)}}$ is continuous (and so also uniformly continuous) on $\{z\in\C:R\leq|z|\leq1\}$. Thus, there exists $R_0\in(R,1)$ with the property that 
		\begin{equation}\label{un_cont}
			\norm*{\frac{1}{B^{(0)}_r\cdots B^{(k)}_r}-\frac{1}{B^{(0)}\cdots B^{(k)}}}_K<\frac{1}{2(k+2)}
		\end{equation}
		for any $r\in(R_0,1)$.
		
		By Theorem \ref{sim_approx_disc}, we get a finite Blaschke product $B^{(k+1)}$ and $r_{k+1}\in\left(\max\{r_k,R_0\},1\right)$ such that
		\begin{equation}\label{boundary}
			\norm*{B_{r_{k+1}}^{(k+1)}-\frac{\vp_{k+1}}{B^{(0)}\cdots B^{(k)}}}_K<\frac{1}{2(k+2)}
		\end{equation}
		and
		\begin{equation}\label{conv}
			\norm*{B^{(k+1)}-1}_{\overline{D(0,r_k)}}<\frac{1}{2^{k+1}}.
		\end{equation}
		We note that in the construction we can assume that $r_k\to1$ as $k\to\infty$. Let $B:=\prod_{k=0}^\infty B^{(k)}$. By \eqref{conv}, the infinite product is convergent and so $B$ is an infinite Blaschke product (see Theorem 15.6 in \cite{Rudin1987}). Let $\vp\in BC(K)$. By the density of the sequence $(\vp_m)_m$ in $BC(K)$, there exists a sequence $(\lambda_n)_n$ in $\N$ such that
		\begin{equation}
			\norm*{\vp_{\lambda_n}-\vp}_K\to0\ \text{ as }\ n\to\infty.
		\end{equation}
		Let $n\in\N$ be fixed. We then have
		\begin{equation}
			\norm*{B_{r_{\lambda_n}}-\vp_{\lambda_n}}_K
			=\norm*{\prod_{k=0}^{\lambda_n}B_{r_{\lambda_n}}^{(k)}\prod_{k=\lambda_n+1}^{\infty}B_{r_{\lambda_n}}^{(k)}-\vp_{\lambda_n}}_K
			\leq\norm*{\prod_{k=0}^{\lambda_n}B_{r_{\lambda_n}}^{(k)}-\vp_{\lambda_n}}_K
			+\norm*{\prod_{k=\lambda_n+1}^{\infty}B_{r_{\lambda_n}}^{(k)}-1}_K.
		\end{equation}
		By \eqref{un_cont} and \eqref{boundary}, we get
		\begin{align}
			\norm*{\prod_{k=0}^{\lambda_n}B_{r_{\lambda_n}}^{(k)}-\vp_{\lambda_n}}_K
			&\leq\norm*{B_{r_{\lambda_n}}^{(\lambda_n)}-\frac{\vp_{\lambda_n}}{B^{(0)}_{r_{\lambda_n}}\cdots B^{(\lambda_n-1)}_{r_{\lambda_n}}}}_K\nonumber\\ 
			&\leq\norm*{B_{r_{\lambda_n}}^{(\lambda_n)}-\frac{\vp_{\lambda_n}}{B^{(0)}\cdots B^{(\lambda_n-1)}}}_K+\norm*{\frac{1}{B^{(0)}_{r_{\lambda_n}}\cdots B^{(k)}_{r_{\lambda_n}}}-\frac{1}{B^{(0)}\cdots B^{(k)}}}_K\nonumber\\ 
			&<\frac{1}{\lambda_n+1}.
		\end{align}
		By an inductive argument using \eqref{conv}, we also get
		\begin{equation}
			\norm*{\prod_{k=\lambda_n+1}^{\infty}B_{r_{\lambda_n}}^{(k)}-1}_K<\sum_{k=\lambda_n+1}^{\infty}\frac{1}{2^k}.
		\end{equation}
		By combining the last four numbered expressions, we get that
		\begin{equation*}
			\norm*{B_{r_{\lambda_n}}-\vp_{\lambda_n}}_K\to0\ \text{ as }\ n\to\infty
		\end{equation*}
		and so $B\in\UU_K$.
		
		Parts (c) and (d) can proved in a similar way to (a) and (b) respectively with obvious modifications. The details are left to the reader.
	\end{proof}
	\begin{remark}
		Chalmoukis recently showed that the set of Blaschke products is a dense $G_\delta$ subset of $(\mathcal{B},\tau)$ and similarly the set of singular inner functions is a dense $G_\delta$ subset of $(\mathcal{S},\tau)$ \cite{Chalmoukis2024}. Using these results, we can immediately get (b) from (a) and (d) from (c) in the previous theorem after an application of Baire's Theorem. Moreover, we can get in this way the stronger result that the set of Blaschke products (respectively singular inner functions) that belong to the class $\UU_K$ is a dense $G_\delta$ subset of $(\mathcal{B},\tau)$ (respectively $(\mathcal{S},\tau)$). Nonetheless, we include the constructive proofs of (b) and (d) for the sake of completeness.
	\end{remark}
	We note that in the definition of the class $\UU_K$, the numbers $r\in(0,1)$ are not prescribed in any sense. We could alternatively define the class $\UU_K(\rho)$ as follows:
	\begin{definition}
		Let $K$ be a compact subset of $\T$ and $\rho=(r_n)_n$ a sequence in $(0,1)$ converging to 1. We will say that a function $f\in\mathcal{B}$ belongs to the class $\UU_K(\rho)$ if, for any $\vp\in BC(K)$, there exists a subsequence $(r_{\lambda_n})_n$ of $(r_n)_n$ such that
		$$\norm*{f_{r_{\lambda_n}}-\vp}_K\to0.$$
	\end{definition}
	By simple modifications we can prove a stronger version of Theorem \ref{univ_main} where we replace $\UU_K$ with $\UU_K(\rho)$ for any sequence $\rho=(r_n)_n$ in $(0,1)$ that increases to 1. This generalisation is in accordance with the definitions and results in \cite{Charpentier2020} and \cite{Maronikolakis2022}.
	\bibliographystyle{amsplain}
	\bibliography{refs}
\end{document}